\newtheorem{Theorem}{Theorem}[section]
\newtheorem{Proposition}{Proposition}[section]
\theoremstyle{remark}
\newtheorem{Remark}{Remark}[section]
\theoremstyle{definition}
\newtheorem{Definition}{Definition}[section]
\newtheorem{Example}{Example}[section]
\begin{document}

%\markboth{Pavel Kriz}{Space-convergent MCE}

%%%%%%%%%%%%%%%%%%% Publisher's Area please ignore %%%%%%%%%%%%%%%%%%%%%%%
%\catchline{}{}{}{}{}
%%%%%%%%%%%%%%%%%%%%%%%%%%%%%%%%%%%%%%%%%%%%%%%%%%%%%%%%%%%%%%%%%%%%%%%%%%

\title[Space-convergent MCE]{A SPACE-CONSISTENT VERSION OF THE MINIMUM-CONTRAST ESTIMATOR FOR LINEAR STOCHASTIC EVOLUTION EQUATIONS}

\author{Pavel K\v r\' i\v z}

\address{University of Chemistry and Technology,  Prague, Department of Mathematics, Technick\' a 5, Prague 6, Czech Republic}
\email{pavel.kriz@vscht.cz}

\date{}

\subjclass[2010]{60H15, 60G22, 62M09}

\keywords{Fractional SPDEs; Spectral approach; Minimum-contrast estimator.}

%\begin{history}
%\received{(Day Month Year)}
%\revised{(Day Month Year)}
%\accepted{(Day Month Year)}
%\comby{(xxxxxxxxxx)}
%\end{history}

\begin{abstract}
A new modification of the minimum-contrast estimator (the weighted MCE) of drift parameter in a linear stochastic evolution equation with additive fractional noise is introduced in the setting of the spectral approach (Fourier coordinates of the solution are observed). The reweighing technique, which utilizes the self-similarity property, achieves strong consistency and asymptotic normality of the estimator as number of coordinates increases and time horizon is fixed (the space consistency). In this respect, this modification outperforms the standard (non-weighted) minimum-contrast estimator. Compared to other drift estimators studied within spectral approach (eg. maximum likelihood, trajectory fitting), the weighted MCE is rather universal. It covers discrete time as well as continuous time observations and it is applicable to processes with any value of Hurst index $H \in (0,1)$. To the author's best knowledge, this is so far the first space-consistent estimator studied for $H < 1/2$.
\end{abstract}

\maketitle

%\ccode{AMS Subject Classification: 60H15, 60G22, 62M09}

\section{Introduction}

This paper is a contribution to the spectral approach in the theory of statistical inference for parabolic linear stochastic partial differential equations (SPDEs), or more generally to linear stochastic evolution equations (SEE), with additive noise generated by a fractional Brownian motion (fBm). Coordinate projections of the solutions to these equations can be interpreted as real-valued fractional Ornstein-Uhlebneck processes (fOU). For more details on the spectral approach, consult the papers \cite{Lototsky2009} or more recent \cite{Cialenco2018}. With respect to the drift parameter estimation in linear SPDEs with additive noise, the following techniques have been studied:
\begin{itemize}
    \item The maximum likelihood estimators (MLE), initiated in \cite{Huebner-Rozovskii1995} for diagonalizable SPDEs driven by a cylindrical Wiener process and generalized for a cylindrical fBm with Hurst parameter $H \geq \frac{1}{2}$ in \cite{Cialenco-Lototsky-Pospisil2009}.
    \item The minimum contrast estimators (MCE), introduced in \cite{Kosky-Loges} for linear SPDEs with Wiener noise and studied in \cite{KrizMaslowski} and \cite{MaPo-Ergo} for equations driven by a fBm.
    \item The least squares estimator (LSE), application of which to one-dimensional projections of solutions to linear SPDEs driven by regular fBm was studied in \cite{Maslowski-Tudor2013}.
    \item The trajectory fitting estimator (TFE), introduced in the setting of parabolic diagonalizable linear SPDEs with Wiener noise in \cite{Cialenco-Gong-Huang2018}.
\end{itemize}

Properties of the MCE for drift parameter of a real-valued fractional Ornstein-Uhlenbeck process have been intensively studied in last few years. We refer the reader to the articles \cite{Sottinen-Vitasaary} for continuous-time setting, \cite{SebaiyViens2016} for discrete-time setting and \cite{Hu-Nualart-Zhou2017} for comparison with the LSE, to name just a few. These works benefit from the relation of Malliavin calculus and central limit theorems -- a popular theory initiated in \cite{Nualart-Pecatti2005} and further developed by many authors (see e.g. \cite{NourdinPeccati2015} and references therein). These techniques were recently applied to the MCE in infinite-dimensional setting in \cite{KrizMaslowski} and are also utilized in the present work. Another interesting work on minimum-contrast estimation is \cite{Bishwal2011}, where a different version of the MCE based on fundamental-martingale technique for a real-valued fOU with Hurst index $H>1/2$ is studied in the frameworks of both continuous-time and discrete-time sampling. Regarding drift parameter estimation for singular real-valued fOU processes ($H < 1/2$), recall the paper \cite{Hu-Nualart-Zhou2017}, where the LSE and the MCE for continuous-time observation are studied and the work \cite{kubilius2015}, which investigates discrete-time version of the LSE.\\

A modification of the minimum-contrast estimator (the weighted MCE) is introduced in this paper. Its construction benefits from the self-similarity property and it turns the time-consistent MCE (consistent with increasing time horizon) into a space-consistent estimator (fixed time horizon and increasing number of Fourier coordinates), which provides the best attainable speed of convergence in discrete-time setting. We believe this approach is potentially applicable to other types of time-consistent estimators, such as the LSE, and for different types of models (but still having the self-similarity property). To the author's best knowledge, this approach is new even in the basic case of parabolic diagonalizable equations with white additive noise (in space and time).\\

As demonstrated below, we see the main advantage of the newly proposed weighted MCE (over the above listed types of estimators) in its universality. With straightforward modifications, it can be used both for continuous-time and discrete-time observations, stationary and non-stationary processes and for noise processes with all values of Hurst parameter $H\in (0,1)$. These different settings require modified treatments and so they are studied separately in this paper. To the author's best knowledge, this is so far the first work that provides a drift estimator that is consistent in space (fixed time horizon, increasing number of coordinates) for infinite-dimensional SEEs with singular fractional noise ($H < 1/2$). Other estimators within spectral approach have been studied assuming $H=1/2$ or $H \geq 1/2$. \\

The main limitation of the weighted MCE is the fact that it requires the knowledge of the Hurst index $H$ and the noise intensities (volatilities) $\sigma_k$ in coordinates $k=1,2,\dots$. Values of these parameters can be known a priory in some special cases, such as noise that is white in time ($H = 1/2$) and white in space ($\sigma_k = 1, \, k=1,2,\dots$). Otherwise, these have to be determined or estimated from the observations. If continuous trajectories are observed (or high-frequency data considered), the value of $H$ can be determined using one of many infill consistent estimators for real-valued processes applied to a single coordinate projection observed in a fixed time window. To this respect let us mention the estimator of $H$ based on empirical quadratic variations of a filtered process (cf. \cite{Istas-Lang1997}), its version for a single point projection of an infinite-dimensional process of Ornstien-Uhlenbeck type (cf. \cite{torres2014}), the more robust (to outliers) estimators based on sample quantiles or trimmed means of a filtered real-valued fOU process (cf. \cite{Coeurjolly2008}) or the estimators based on a wavelet transform of a partially observed real-valued fOU process (cf. \cite{gloter2007} or \cite{Rosenbaum2008}), but this list is by no means complete. Simultaneous consistent estimation of $\sigma_k$ and $H$  from the observed $k$-th coordinate in high-frequency setting can be made using the powers of the second order variations (see \cite{Berzin_et_al2014}, Chapter 3.3). For infinite-dimensional fOU with discrete time setting (trajectories observed in fixed time instants, number of coordinate projections is increasing), one can estimate $H$ or both $H$ and $\sigma_k$ independently from the drift parameter by one of the procedures mentioned above and plug these estimates into the weighted MCE of the drift parameter (similarly to the approach for estimating drift, diffusion and Hurst parameter for real-valued fOU presented in \cite{Brouste-Iacus2013}). This may, however, negatively affect asymptotic properties of the weighted MCE. The study of this effect is beyond the scope of this paper. Another interesting work on joint estimation of drift, diffusion and Hurst parameter for discretely observed real-valued fOU is \cite{barboza-viens2017}, where the generalized method of moments (GMM) is applied. It might be an option to try to modify this GMM by appropriate reweighing (similarly to the modification of the MCE below) to get consistent simultaneous estimates in infinite-dimensional setting. Such study is, however, outside the scope of this article and might be the direction of further research.\\

This paper is organized as follows. In section \ref{sec: init setting}, the setting for the weighted MCE is specified. The conditions for existence of a (distribution-valued) stationary solution to a linear SEE with fractional additive noise are formulated. An example with fractional heat equation is presented. In section \ref{sec: estimation stat}, the weighted MCE for stationary solutions is derived and its consistency and asymptotic normality in space are proved. Discrete-time observations and continuous-time observations are studied separately, because formulas for estimators as well as asymptotic properties are different. In section \ref{sec: estimation non-stat}, the behavior of the weighted MCE for non-stationary solutions is considered. Section \ref{sec: comparison} is devoted to the comparison of the weighted MCE to other estimators.

\section{Initial setting}\label{sec: init setting}
Consider a linear stochastic evolution equation in a separable Hilbert space $\mathcal{V}$, which is driven by a fractional Brownian motion:

\begin{align}
dX(t) &= \alpha A X(t) dt + \Phi dB^{H}(t), \label{eq: SPDE}\\
 X(0) & = X_0. \label{eq: init_Cond}
\end{align}

In this equation, $\alpha > 0$ is an unknown parameter, $A : Dom(A) \subset \mathcal{V} \to \mathcal{V}$ and $\Phi : Dom(\Phi) \subset \mathcal{V} \to \mathcal{V}$ are densely-defined self-adjoint linear operators and  $(B^{H}(t), t \in \mathbb{R})$ is a standard two-sided cylindrical fractional Brownian motion on $\mathcal{V}$ with Hurst parameter $H \in (0,1)$, defined on a suitable probability space $(\Omega, \mathcal{F}, P)$. Note that $\Phi$ need not be bounded. The initial condition $X_0$ is assumed to be a random variable with values in an interpolation space $\mathcal{V}^{\gamma}$ (to be specified below) for some $\gamma \in \mathbb{R}$.\\

Assume that the equation \eqref{eq: SPDE} is diagonalizable, i.e. there is an orthonormal basis $\{e_k\}_{k \in \mathbb{N}}$ of the space $\mathcal{V}$  consisting of common eigenfunctions of operators $A$ and $\Phi$:\\
\begin{align}
& Ae_k = - \theta_k e_k, \quad \text{ with } \theta_k > 0, \text{ and} \label{eq: A1}\\
& \Phi e_k = \sigma_k e_k, \quad \text{ with } \sigma_k > 0. \label{eq: A1_2}
\end{align}
The standard two-sided cylindrical fractional Brownian motion $(B^{H}(t), t \in \mathbb{R})$ on $\mathcal{V}$ can be understood in the weak sense as a functional acting on $\mathcal{V}$ with
\[
\langle B^{H}(t), e_k\rangle = \beta_k^{H}(t), \quad \text{for }k=1,2,\ldots
\]
where $(\beta_k^{H}(t), t \in \mathbb{R})$ are mutually independent real-valued standard fractional Brownian motions and  $\langle B^{H}(t), x\rangle$ is the evaluation of $B^{H}(t)$ at $x$ (see e.g. \cite{MaPo-Ergo} for more details). Note that in infinite-dimensional setting, $B^{H}(t)$ does not take values in  $\mathcal{V}$.\\

Following the standard construction of the solutions to diagonalizable stochastic parabolic equations (cf. \cite{Cialenco-Lototsky-Pospisil2009}), we introduce a scale of Hilbert spaces $\mathcal{V}^\gamma$ indexed by $\gamma \in \mathbb{R}$ (also called the interpolation spaces). Take the strictly positive operator $\Lambda = \sqrt{I - A}$. The powers of this operator are well-defined and

\begin{equation}\label{eq: Lambda spectrum}
\Lambda^\gamma e_k = (1+\theta_k)^{\gamma /2} e_k.
\end{equation}

For $\gamma > 0$, we set $\mathcal{V}^\gamma$ to be the domain of $\Lambda^\gamma$ with the graph norm $|.|_{\mathcal{V}^\gamma} = |\Lambda^\gamma .|_\mathcal{V}$. For $\gamma = 0$, we set $\mathcal{V}^0 = \mathcal{V}$. Finally, for $\gamma < 0$ we define $\mathcal{V}^\gamma$ as the completion of $\mathcal{V}$ with respect to the graph norm $|.|_{\mathcal{V}^\gamma} = |\Lambda^\gamma .|_\mathcal{V}$. The interpolation spaces can be represented via coordinate projections:
\[
\mathcal{V}^\gamma = \left\{ v = \sum_{k=1}^{\infty}v_k e_k : \sum_{k=1}^{\infty}(1+\theta_k)^{\gamma} v_k^2 < \infty  \right\}, \quad \forall \gamma \in \mathbb{R},
\]
with
\[
|v|_{\mathcal{V}^\gamma}^2 = \left|\sum_{k=1}^{\infty}v_k e_k\right|_{\mathcal{V}^\gamma}^2 =  \left|\sum_{k=1}^{\infty}(1+\theta_k)^{\gamma /2}v_k e_k\right|_{\mathcal{V}}^2 = \sum_{k=1}^{\infty}(1+\theta_k)^{\gamma} v_k^2.
\]
Recall that for $\gamma_1 < \gamma_2$ the space $\mathcal{V}^{\gamma_2}$ is continuously and densely embedded into $\mathcal{V}^{\gamma_1}$ and for any $\gamma > 0$, $\mathcal{V}^{-\gamma}$ is the dual of $\mathcal{V}^{\gamma}$ relative to the inner product in $\mathcal{V}$ with the dual pairing:
\[
\langle v_1 | v_2 \rangle_\gamma = \langle \Lambda^{-\gamma} v_1, \Lambda^{\gamma} v_2\rangle_{\mathcal{V}}, \quad v_1 \in \mathcal{V}^{-\gamma}, v_2 \in \mathcal{V}^{\gamma}.
\]
Note that $\{e_k\}_{k \in \mathbb{N}}$ is an orthogonal basis of $\mathcal{V}^{\gamma}$ for each $\gamma \in \mathbb{R}$ and for any $v = \sum_{k=1}^{\infty}v_k e_k \in \mathcal{V}^\gamma$, the coordinates can be reconstructed by dual pairing:
\[
v_k = \langle v | e_k \rangle_\gamma.
\]

\begin{Definition}\label{def: diag solution}
The solution to the diagonalizable stochastic equation \eqref{eq: SPDE} with initial condition \eqref{eq: init_Cond} is a process $(X(t): t \geq 0)$ with values in $\mathcal{V}^{\gamma}$ for some $\gamma \in \mathbb{R}$ and with the expansion
\begin{equation}\label{eq: def solution sum}
X(t) = \sum_{k=1}^{\infty} x_k(t) e_k,
\end{equation}
where
\begin{align}
&x_k(t) = x_k(0)  e^{- \alpha \theta_k t} + \int_{0}^{t} e^{- \alpha \theta_k (t-s)} \sigma_k d\beta_k^{H}(s), \label{eq: 1D fOU} \\
&x_k(0) = \langle X_0| e_k \rangle_{\gamma}, \label{eq: 1D fOU_2}
\end{align}
and the sum \eqref{eq: def solution sum} converges in $L_2(\Omega,\mathcal{V}^\gamma)$ sense for some $\gamma \in \mathbb{R}$.
\end{Definition}

For each $k \in \mathbb{N}$, setting the initial condition to $x_k(0) = \int_{-\infty}^{0} e^{ \alpha \theta_k s} \sigma_k d\beta_k^{H}(s)$ makes the solutions \eqref{eq: 1D fOU} stationary fractional Ornstein-Uhlenbeck processes. Denote these processes $(z_k(t): t \geq 0)$ and build a stationary solution to the original equation \eqref{eq: SPDE} as follows
\begin{equation} \label{eq: constr. stat solut}
Z(t) := \sum_{k=1}^{\infty} z_k(t) e_k, \quad t \geq 0,
\end{equation}
if the sum converges in $L_2(\Omega,\mathcal{V}^\gamma)$ sense.

\begin{Theorem}\label{thm: existence of stat solution}
Let
\begin{equation}\label{eq: stat condition}
    \sum_{k=1}^{\infty} \frac{\sigma_k^2}{(1+\theta_k)^{\gamma}} \left(1+ \frac{1}{\theta_k} \right)^{2H} < \infty
\end{equation}
for some $\gamma \in \mathbb{R}$. Then equation \eqref{eq: SPDE} admits a stationary solution $(Z(t): t \geq 0)$ given by \eqref{eq: constr. stat solut} and $Z(t) \in L_{2}(\Omega, \mathcal{V}^{2H-\gamma})$ for each $t \geq 0$.\\

In addition, if
\begin{equation}\label{eq: init_integrable}
X_0 \in L_{2}(\Omega, \mathcal{V}^{2H-\gamma}),
\end{equation}
the equation \eqref{eq: SPDE} with initial condition \eqref{eq: init_Cond} has a solution $(X(t): t \geq 0)$ with $X(t) \in L_{2}(\Omega, \mathcal{V}^{2H-\gamma})$ for each $t \geq 0$.\\
\end{Theorem}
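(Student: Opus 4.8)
The plan is to reduce the whole statement to a coordinatewise computation: since $\{e_k\}$ is an orthogonal basis of $\mathcal{V}^{2H-\gamma}$ with $|e_k|_{\mathcal{V}^{2H-\gamma}}^2=(1+\theta_k)^{2H-\gamma}$, the random series \eqref{eq: constr. stat solut} converges in $L_2(\Omega,\mathcal{V}^{2H-\gamma})$ iff the weighted sum of coordinate variances is finite, and I would show that condition \eqref{eq: stat condition} is exactly that summability condition. First I would record the stationary variance of a single Fourier coordinate. Each $z_k$ is a stationary fractional Ornstein--Uhlenbeck process with mean-reversion rate $\alpha\theta_k$ and volatility $\sigma_k$, so by the known stationary-variance formula for the fOU,
\[
\mathbb{E}[z_k(t)^2] = \sigma_k^2\,(\alpha\theta_k)^{-2H}\,H\Gamma(2H),
\]
which is independent of $t$. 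The essential feature is the scaling $\mathbb{E}[z_k(t)^2]\asymp \sigma_k^2\,\theta_k^{-2H}$.

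Next, using Parseval's identity in $\mathcal{V}^{2H-\gamma}$ together with the orthogonality of $\{e_k\}$ (which already kills the cross terms pathwise, so no independence is needed here), I would compute
\[
\mathbb{E}\bigl[|Z(t)|^2_{\mathcal{V}^{2H-\gamma}}\bigr] = \sum_{k=1}^\infty (1+\theta_k)^{2H-\gamma}\,\mathbb{E}[z_k(t)^2] = H\Gamma(2H)\,\alpha^{-2H}\sum_{k=1}^\infty \sigma_k^2\,(1+\theta_k)^{2H-\gamma}\,\theta_k^{-2H}.
\]
The elementary identity $(1+\theta_k)^{2H-\gamma}\theta_k^{-2H}=(1+\theta_k)^{-\gamma}(1+1/\theta_k)^{2H}$ shows the right-hand side equals, up to the constant $H\Gamma(2H)\alpha^{-2H}$, the sum in \eqref{eq: stat condition}. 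Its finiteness makes the partial sums of \eqref{eq: constr. stat solut} Cauchy in $L_2(\Omega,\mathcal{V}^{2H-\gamma})$, so $Z(t)$ is well defined with $Z(t)\in L_2(\Omega,\mathcal{V}^{2H-\gamma})$; stationarity of $Z$ then follows because the coordinate processes $(z_k)_k$ are mutually independent and each stationary, hence jointly stationary, and $Z$ is their $L_2$-limit.

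For the inhomogeneous problem I would use the decomposition, immediate from \eqref{eq: 1D fOU} and the definition of $z_k$,
\[
x_k(t) = z_k(t) + \bigl(x_k(0) - z_k(0)\bigr)e^{-\alpha\theta_k t}.
\]
Since $e^{-\alpha\theta_k t}\le 1$ for $t\ge 0$ and $(a-b)^2\le 2a^2+2b^2$, summing against the weights $(1+\theta_k)^{2H-\gamma}$ gives
\[
\mathbb{E}\bigl[|X(t)-Z(t)|^2_{\mathcal{V}^{2H-\gamma}}\bigr] \le \sum_{k=1}^\infty (1+\theta_k)^{2H-\gamma}\,\mathbb{E}\bigl[(x_k(0)-z_k(0))^2\bigr] \le 2\,\mathbb{E}\bigl[|X_0|^2_{\mathcal{V}^{2H-\gamma}}\bigr] + 2\,\mathbb{E}\bigl[|Z(0)|^2_{\mathcal{V}^{2H-\gamma}}\bigr],
\]
both terms finite by \eqref{eq: init_integrable} and by the first part. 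Hence $X(t)-Z(t)\in L_2(\Omega,\mathcal{V}^{2H-\gamma})$, and therefore $X(t)\in L_2(\Omega,\mathcal{V}^{2H-\gamma})$ for every $t\ge 0$.

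The only genuinely delicate step is the stationary-variance formula, and precisely in the regime that motivates the paper: for $H<1/2$ the formal increment covariance $|u-v|^{2H-2}$ is non-integrable, so the double-integral computation of $\mathbb{E}[z_k(t)^2]$ must be carried out through the isometry for Wiener integrals with respect to fractional Brownian motion (or via the spectral density of the fOU) rather than the naive singular kernel. Once that closed form is established uniformly in $H\in(0,1)$, everything downstream is the bookkeeping sketched above.
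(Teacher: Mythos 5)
Your proposal is correct and follows essentially the same route as the paper's proof: the same stationary-variance formula $\mathbb{E}z_k(t)^2=\sigma_k^2(\alpha\theta_k)^{-2H}H\Gamma(2H)$ (which the paper likewise takes as known, citing Hu--Nualart--Zhou), the same coordinatewise Parseval computation matching \eqref{eq: stat condition}, and the same decomposition $x_k(t)=z_k(t)+(x_k(0)-z_k(0))e^{-\alpha\theta_k t}$ for the non-stationary part. Your only departures are cosmetic refinements --- making the identity $(1+\theta_k)^{2H-\gamma}\theta_k^{-2H}=(1+\theta_k)^{-\gamma}(1+1/\theta_k)^{2H}$ and the stationarity of the $L_2$-limit explicit --- which the paper leaves implicit.
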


\begin{proof}
Recall (cf. for example \cite{Hu-Nualart-Zhou2017})
\begin{equation}\label{eq: r_k(0)}
\mathbb{E} z_k(t)^2 = \frac{\sigma_k^2}{(\alpha \theta_k)^{2H}}H\Gamma(2H) =: r_k(0), \quad \forall t \geq 0.
\end{equation}
Consequently
\begin{equation}
\begin{aligned}[b]
&\mathbb{E} \left| \sum_{k=1}^{\infty} z_k(t) e_k \right|^2_{\mathcal{V}^{2H-\gamma}} = \mathbb{E}  \sum_{k=1}^{\infty} (1+\theta_k)^{2H-\gamma} z_k(t)^2\\
& = \sum_{k=1}^{\infty} (1+\theta_k)^{2H-\gamma}\frac{\sigma_k^2}{(\alpha \theta_k)^{2H}}H\Gamma(2H).
\end{aligned}
\end{equation}
The condition \eqref{eq: stat condition} then ensures the existence and integrability of the stationary solution. \\

For the solution with the initial condition \eqref{eq: init_Cond}, write
\[
x_k(t) = z_k(t) - e^{-\alpha \theta_k t} z_k(0) + e^{-\alpha \theta_k t} x_k(0).
\]
Thus,
\[
\mathbb{E} x_k(t)^2 \leq C_1  \frac{\sigma_k^2}{\theta_k^{2H}} + C_2 \mathbb{E} x_k(0)^2.
\]
To conclude the proof, calculate
\[
\mathbb{E} \left| \sum_{k=1}^{\infty} x_k(t) e_k \right|^2_{\mathcal{V}^{2H-\gamma}} \leq C_1 \sum_{k=1}^{\infty}  (1+\theta_k)^{2H-\gamma}\frac{\sigma_k^2}{\theta_k^{2H}} + C_2 \sum_{k=1}^{\infty} (1+\theta_k)^{2H-\gamma} \mathbb{E} x_k(0)^2.
\]
The first sum is finite due to \eqref{eq: stat condition} and the second sum due to \eqref{eq: init_integrable}.
\end{proof}

Note that in typical applications (eg. $A$ being a differential operator on a smooth bounded $d$-dimensional domain) $\inf_k \{\theta_k\} > 0$ and we can simplify the condition \eqref{eq: stat condition} to the form:
\[
  \sum_{k=1}^{\infty} \frac{\sigma_k^2}{(1+\theta_k)^{\gamma}} < \infty.
\]

\begin{Remark}\label{remark: 2 operators}
Some works on drift parameter estimation in linear SPDEs (eg. \cite{Huebner-Rozovskii1995}, \cite{Cialenco-Lototsky-Pospisil2009} or \cite{Cialenco-Gong-Huang2018}) consider an additional term in the drift operator:
\begin{equation} \label{eq: SPDE - 2 term drift}
\begin{aligned}[b]
dX(t) &= (\alpha A_0 + A_1) X(t) dt + \Phi dB^{H}(t),\\
\end{aligned}
\end{equation}
with the corresponding diagonality assumption being
\begin{align}
& A_0 e_k = - \theta_k e_k, \quad \text{ with } \theta_k > 0, \label{eq: A1 - 2 term drift}\\
& A_1 e_k = - \nu_k e_k, \quad \text{ with } \nu_k > 0, \text{ and} \label{eq: A1 - 2 term drift_2}\\
& \Phi e_k = \sigma_k e_k, \quad \text{ with } \sigma_k > 0 \label{eq: A1 - 2 term drift_3}.
\end{align}
In this case, the assertion of Theorem \ref{thm: existence of stat solution} remains valid with $\theta_k$ being replaced by $\alpha \theta_k + \nu_k$. However, as discussed in Remark \ref{MCE - 2 term drift} below, the minimum-contrast estimator of $\alpha$ can no longer be expressed by a closed analytic formula, but is defined as an implicit solution to a moment equation. For the sake of simplicity and clarity, we thus stick to the simpler equation \eqref{eq: SPDE}.
\end{Remark}

\begin{Example}\label{example: stoch heat eqn - existence}
Consider the following formal heat equation with distributed fractional noise and Dirichlet boundary condition:

\begin{align}
&\frac{\partial f}{\partial t}(t,u) = \alpha \; \Delta f(t,u) + \eta^H(t,u), \quad \text{ for } (t,u) \in \mathbb{R}_+ \times \mathcal{O},  \label{eq: stochastic heat eq - naive} \\
&f(t,u) = 0, \quad \text{ for } (t,u) \in \mathbb{R}_+ \times \partial \mathcal{O}, \label{eq: stochastic heat eq - naive_2}  \\
&f(0,u)=X_0, \quad \text{ for } u \in \mathcal{O} \label{eq: stochastic heat eq - naive_3} ,
\end{align}

where $\Delta$ is Laplace operator,  $\mathcal{O} \subset \mathbb{R}^d$ is a bounded domain with smooth boundary $\partial \mathcal{O}$, $\alpha >0$ is the unknown parameter (e.g. heat conductivity), $X_0 \in L^2(\mathcal{O})$ is a deterministic initial condition and $(\eta^H(t,u): t\geq 0, u \in \mathcal{O})$ is a noise, which is fractional in time with Hurst parameter $H\in(0,1)$ and white in space.  \\

To give this formal equation rigorous meaning, reformulate it as a stochastic evolution equation (see \eqref{eq: SPDE})
\begin{align*}
&dX(t) = \alpha A X(t) dt + \Phi dB^{H}(t),\\
&X(0) = X_0,
\end{align*}

where $\mathcal{V} = L^2(\mathcal{O})$, $X_0 \in L^2(\mathcal{O})$, $A = \Delta|_{Dom(A)}$ with $Dom(A) = H^2(\mathcal{O}) \cap H^1_0(\mathcal{O})$ is  Dirichlet Laplace operator defined on a standard Sobolev space (cf. \cite{Shubin2001}), $(B^{H}(t), t \geq 0)$ is a cylindrical fBm and $\Phi$ is identity operator.\\

This equation is diagonalizable with eigenfunctions $\{e_k\}_{k \in \mathbb{N}}$ of $A$, which form an orthonormal basis of $L^2(\mathcal{O})$. The corresponding eigenvalues can be arranged in a sequence meeting the following growth condition (cf. \cite{Shubin2001}):
\begin{equation}\label{eq: Laplace_eigen_growth}
 \theta_k \asymp k^{\frac{2}{d}},
\end{equation}
where $a_k \asymp b_k$ means that there exist constants $0< c \leq C < \infty$ so that $c b_k \leq a_k \leq C b_k$  for all $k=1,2,\ldots$. In view of \eqref{eq: Laplace_eigen_growth}, condition \eqref{eq: stat condition} is fulfilled if $\gamma > \frac{d}{2}$ and $\eqref{eq: init_integrable}$ holds with $2H-\gamma \leq  0$.  Hence, for any $t\geq 0$, we have the existence of the solution $X_t \in L_{2}(\Omega, \mathcal{V}^{\min\{2H-\gamma,0\}})$, with any $\gamma > \frac{d}{2}$. In particular, should $X_t \in L_{2}(\Omega, L^2(\mathcal{O}))$, the condition $H > \frac{d}{4}$ must be satisfied.\\
\end{Example}

\section{Estimation in stationary case}\label{sec: estimation stat}

In this section, the weighted minimu-contrast estimator of $\alpha$ for stationary solution is derived.

\subsection{Preliminaries}
Recall the following $4^{th}$ moment theorem (see e.g. \cite{NourdinPeccati2015} or references therein for details):

\begin{Proposition}\label{prop: 4th moment}
Consider an isonormal Gaussian process $\mathbb{X}$ on a separable Hilbert space $\mathcal{H}$. Let $(F_n: n \in \mathbb{N})$ be a sequence of random variables belonging to the \mbox{$q$-th} Wiener chaos of $\mathbb{X}$ with $\mathbb{E}F^2_n = 1$ and consider a normally distributed random variable $U \sim \mathcal{N}(0,1)$. Then
\[
d_{TV}(F_n,U) \leq \sqrt{\frac{4q-4}{3q}}\sqrt{\mathbb{E}F_n^4 - 3} = \sqrt{\frac{4q-4}{3q}}\sqrt{\kappa_4(F_n)},
\]
where $d_{TV}$ denotes the total-variation distance of measures (or distributions of random variables) and
$\kappa_4(F_n) = \mathbb{E}F_n^4 - 3$ is the $4^{th}$ cumulant of $F_n$.
\end{Proposition}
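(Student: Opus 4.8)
The plan is to invoke the Malliavin--Stein method, which is the standard route to this quantitative fourth moment theorem (see \cite{NourdinPeccati2015}); I would present it as Stein's method for the total-variation distance combined with Malliavin integration by parts, closed off by the Nualart--Peccati cumulant identity. First I would recall Stein's characterization that $U \sim \mathcal{N}(0,1)$ iff $\mathbb{E}[f'(U) - U f(U)] = 0$ for all smooth bounded $f$. For a test function $h$ with $\|h\|_\infty \le 1$, the Stein equation $f'(x) - x f(x) = h(x) - \mathbb{E}[h(U)]$ has a solution $f_h$ with $\|f_h'\|_\infty \le 2$, so taking the supremum over such $h$ gives
\[
d_{TV}(F_n, U) \le \sup_{\|f'\|_\infty \le 2} \big| \mathbb{E}[f'(F_n)] - \mathbb{E}[F_n f(F_n)] \big|.
\]

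Next I would rewrite the second expectation using Malliavin calculus. Since $F_n$ lies in the $q$-th chaos with $q \ge 1$ it is centered, and the identity $F_n = -\delta D L^{-1} F_n$ together with the integration-by-parts formula yields $\mathbb{E}[F_n f(F_n)] = \mathbb{E}[f'(F_n)\,\langle DF_n, -DL^{-1}F_n\rangle_{\mathcal{H}}]$, where $D$, $\delta$, $L$ are the Malliavin derivative, divergence, and Ornstein--Uhlenbeck generator. For $F_n$ in the $q$-th chaos one has $L^{-1}F_n = -q^{-1}F_n$, hence $-DL^{-1}F_n = q^{-1}DF_n$ and $\langle DF_n, -DL^{-1}F_n\rangle_{\mathcal{H}} = q^{-1}\|DF_n\|_{\mathcal{H}}^2$. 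Using $\|f'\|_\infty \le 2$, the Cauchy--Schwarz inequality, and the normalization $\mathbb{E}[q^{-1}\|DF_n\|_{\mathcal{H}}^2] = \mathbb{E}F_n^2 = 1$, I obtain
\[
d_{TV}(F_n, U) \le 2\,\mathbb{E}\Big| 1 - \tfrac{1}{q}\|DF_n\|_{\mathcal{H}}^2\Big| \le 2\sqrt{\operatorname{Var}\Big(\tfrac{1}{q}\|DF_n\|_{\mathcal{H}}^2\Big)}.
\]

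The main obstacle, and the only genuinely delicate step, is the cumulant inequality
\[
\operatorname{Var}\Big(\tfrac{1}{q}\|DF_n\|_{\mathcal{H}}^2\Big) \le \frac{q-1}{3q}\,\kappa_4(F_n),
\]
which converts the analytic bound into the announced estimate $d_{TV}(F_n,U) \le \sqrt{\tfrac{4q-4}{3q}}\sqrt{\kappa_4(F_n)}$. Writing $F_n = I_q(f_n)$, I would expand $\tfrac{1}{q}\|DF_n\|_{\mathcal{H}}^2$ and $F_n^2$ into Wiener chaoses via the product formula for multiple Wiener--It\^o integrals and express both sides in terms of the nonnegative coefficients $C_r := (r!)^2\binom{q}{r}^4 (2q-2r)!\,\|f_n \widetilde{\otimes}_r f_n\|^2$, $r = 1,\dots,q-1$, where $\widetilde{\otimes}_r$ denotes the symmetrized contraction. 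The variance comes out exactly as $\tfrac{1}{q^2}\sum_r r^2 C_r$, while the fourth cumulant equals $\tfrac{3}{q}\sum_r r\, C_r$ plus a further nonnegative remainder (carrying the differences $\|f_n\otimes_r f_n\|^2 - \|f_n\widetilde\otimes_r f_n\|^2 \ge 0$).

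Once both quantities are written with these common coefficients, the conclusion is elementary: the bound $r^2 \le (q-1)\,r$ valid for $1 \le r \le q-1$ gives $\tfrac{1}{q^2}\sum_r r^2 C_r \le \tfrac{q-1}{q^2}\sum_r r\, C_r = \tfrac{q-1}{3q}\cdot \tfrac{3}{q}\sum_r r\, C_r \le \tfrac{q-1}{3q}\,\kappa_4(F_n)$, which is exactly the required inequality. This contraction bookkeeping is the crux of the argument; the Stein and integration-by-parts steps are standard machinery. In the write-up I would therefore either reproduce these contraction computations in full or simply refer to \cite{NourdinPeccati2015}, where they are carried out in detail, since the statement is quoted here only as a tool for the subsequent asymptotic-normality results.
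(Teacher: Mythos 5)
Your proposal is correct in both structure and constants: the Stein-equation bound $d_{TV}(F_n,U)\le 2\,\mathbb{E}\bigl|1-\tfrac{1}{q}\lVert DF_n\rVert_{\mathcal H}^2\bigr|$, the chaos identity $-DL^{-1}F_n=\tfrac1q DF_n$, the variance expansion $\operatorname{Var}\bigl(\tfrac1q\lVert DF_n\rVert_{\mathcal H}^2\bigr)=\tfrac{1}{q^2}\sum_{r=1}^{q-1}r^2C_r$, the cumulant lower bound $\kappa_4(F_n)\ge\tfrac{3}{q}\sum_{r=1}^{q-1}rC_r$ with nonnegative remainder, and the elementary estimate $r^2\le (q-1)r$ combine exactly as you say, and $2\sqrt{\tfrac{q-1}{3q}}=\sqrt{\tfrac{4q-4}{3q}}$ reproduces the stated constant. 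The paper itself gives no proof of this proposition — it is quoted as a known tool with a pointer to \cite{NourdinPeccati2015} — and your argument is precisely the standard Malliavin--Stein derivation carried out in that reference, so deferring the contraction computations to it, as you suggest, is entirely appropriate here.
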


\subsection{Discrete-time observations}
First assume that the processes $z_k$ are observed in discrete time instants, for simplicity let $t=1,2,\ldots,n$. Recall that the minimum-contrast estimator (see \cite{KrizMaslowski} or \cite{MaPo-Ergo}) is based on the sample second moments, which take the following form in our setting:
\[
\frac{1}{n} \sum_{t=1}^{n} |Z(t)|_{\mathcal{V}}^2 = \frac{1}{n} \sum_{t=1}^{n} \sum_{k=1}^{\infty} z_k(t)^2 = \sum_{k=1}^{\infty} \frac{1}{n} \sum_{t=1}^{n}  z_k(t)^2.
\]
Moreover, self-similarity of fractional Brownian motion implies that the distributions (on the space of trajectories) of the following two processes are same:
\begin{equation}\label{eq: self-similarity}
    Law\biggl(z_k(t): t \in [0,T] \biggr) = Law\biggl(\frac{\sigma_k}{(\alpha \theta_k)^H}z(\alpha \theta_k t): t \in [0,T] \biggr), \quad \forall k \in \mathbb{N},
\end{equation}
where $(z(t),t\geq 0)$ is the canonical fractional Ornstein-Uhlenbeck processes that is the stationary solution to equation
\[
dz(t) = -z(t)dt + d\beta^{H}(t).
\]

Hence, the values of the processes $z_k$ are scaled by $\frac{\sigma_k}{(\alpha \theta_k)^H}$ and the speed of their evolution by $\alpha \theta_k$. To fully utilize the information about $\alpha$ carried by each $z_k$, offset the effect of different scales of values by appropriate weights. For finitely many coordinates $z_k(t), k=1,\ldots,N$ observed in finitely many time instants $t=1,\ldots,n$, define
\begin{equation}\label{eq: def Y_N}
Y_N := \frac{\sum_{k=1}^{N} \frac{1}{n} \sum_{t=1}^{n}  \left(\frac{\theta_k^H}{\sigma_k}z_k(t)\right)^2}{N H \Gamma(2H)} = \frac{\sum_{k=1}^{N} \frac{\theta_k^{2H}}{\sigma_k^2} \frac{1}{n} \sum_{t=1}^{n}  z_k(t)^2}{N H \Gamma(2H)}.
\end{equation}
Using \eqref{eq: r_k(0)},  simple calculation yields
\[
\mathbb{E} (Y_N) = \alpha^{-2H}.
\]
This motivates the definition of the weighted minimum-contrast estimator:
\begin{equation}\label{eq: def rew_MC stat discr}
\alpha^{*}_N := (Y_N)^{-\frac{1}{2H}} = \left(\frac{\sum_{k=1}^{N} \frac{\theta_k^{2H}}{\sigma_k^2} \frac{1}{n} \sum_{t=1}^{n}  z_k(t)^2}{N H \Gamma(2H)}\right)^{-\frac{1}{2H}}.
\end{equation}

The so-called space asymptotics (number of coordinates $N$ grows to infinity, number of time instants $n$ remains fixed) of the weighted MCE is specified in the following theorem.

\begin{Theorem}\label{Thm: space asympt - stat discr}
Let the condition \eqref{eq: stat condition} holds (stationary solution exists) and consider the weighted minimum-contrast estimator $\alpha^{*}_N$ defined for a stationary solution in \eqref{eq: def rew_MC stat discr}. This estimator is \textbf{strongly consistent} in space, i.e.
        \begin{equation}\label{eq: consist stat discr}
        \alpha^{*}_N \overset{N\to \infty}{\longrightarrow} \alpha \quad \text{a.s.},
        \end{equation}
and it is \textbf{asymptotically normal} in space, i.e.
        \begin{equation}\label{eq: AN stat discr}
        \frac{\alpha^{*}_N - \alpha}{\frac{\alpha^{1+2H}}{2H}\sqrt{\text{var}(Y_N)}} \overset{N \to \infty}{\longrightarrow} U \sim \mathcal{N}(0,1) \quad \text{in distribution},
        \end{equation}
with $\text{var}(Y_N) \asymp \frac{1}{N}$ for $N \to \infty$.\\

In addition, let $\lim_{k\to \infty} \theta_k = \infty$ (which is typical e.g. for differential operators on smooth bounded domains). Then we have the asymptotic formula with explicit variance:
        \begin{equation}\label{eq: AN stat discr with variance}
        \sqrt{N}\left(\alpha^{*}_N - \alpha\right) \overset{N \to \infty}{\longrightarrow} \tilde{U} \sim \mathcal{N}(0,\frac{\alpha^{2}}{2n H^2}) \quad \text{in distribution}.
        \end{equation}
\end{Theorem}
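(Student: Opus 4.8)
The plan is to transfer all asymptotics from the scalar statistic $Y_N$ to $\alpha^{*}_N=(Y_N)^{-1/(2H)}$ via the continuous mapping theorem and the delta method. Set $W_k:=\frac{\theta_k^{2H}}{\sigma_k^2}\frac1n\sum_{t=1}^n z_k(t)^2$, so that $Y_N=(NH\Gamma(2H))^{-1}\sum_{k=1}^N W_k$. First I would use the self-similarity \eqref{eq: self-similarity} to obtain the distributional identity $W_k\stackrel{d}{=}\alpha^{-2H}\frac1n\sum_{t=1}^n z(\alpha\theta_k t)^2$, where $z$ is the canonical fOU. Two structural facts drive everything: the $W_k$ are independent in $k$ (the noises $\beta_k^H$ are independent), and for fixed $n$ each $W_k$ is a quadratic form in the $n$-dimensional Gaussian vector $(z(\alpha\theta_k),\dots,z(\alpha\theta_k n))$, whose covariance matrix $\Sigma_k$ has entries $\rho(\alpha\theta_k(t-t'))$, where $\rho(\tau)=\mathbb{E}[z(0)z(\tau)]$ with $\rho(0)=H\Gamma(2H)$. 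The Isserlis--Wick formula then gives $\mathbb{E}W_k=\alpha^{-2H}H\Gamma(2H)$ (hence $\mathbb{E}Y_N=\alpha^{-2H}$) and $\mathrm{var}(W_k)=\frac{2}{n^2\alpha^{4H}}\sum_{t,t'=1}^n\rho(\alpha\theta_k(t-t'))^2$, together with an analogous polynomial expression for $\kappa_4(W_k)$ in the entries of $\Sigma_k$.

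All required estimates follow from these formulae. Since $|\rho(\tau)|\le\rho(0)$, $\mathrm{var}(W_k)$ is bounded above uniformly in $k$; retaining only the diagonal terms $t=t'$ yields the uniform lower bound $\mathrm{var}(W_k)\ge\frac{2\rho(0)^2}{n\alpha^{4H}}>0$; and $\kappa_4(W_k)$ is uniformly bounded because $\Sigma_k$ has uniformly bounded entries in the fixed dimension $n$. For \textbf{strong consistency} I would apply Kolmogorov's strong law to the independent summands: the uniform upper bound gives $\sum_k\mathrm{var}(W_k)/k^2<\infty$, so $\frac1N\sum_{k=1}^N(W_k-\mathbb{E}W_k)\to0$ a.s., whence $Y_N\to\alpha^{-2H}$ a.s. and, since $y\mapsto y^{-1/(2H)}$ is continuous on $(0,\infty)$, $\alpha^{*}_N\to\alpha$ a.s.

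For \textbf{asymptotic normality} I would put $F_N:=(Y_N-\alpha^{-2H})/\sqrt{\mathrm{var}(Y_N)}$; realizing the independent fBms on a common isonormal Gaussian process, $F_N$ is a normalized element of the second Wiener chaos, so Proposition \ref{prop: 4th moment} with $q=2$ reduces the problem to showing $\kappa_4(F_N)\to0$. Additivity of cumulants over independent summands together with $\kappa_4(cX)=c^4\kappa_4(X)$ gives $\kappa_4(F_N)=\big(\sum_{k=1}^N\kappa_4(W_k)\big)\big/\big(\sum_{k=1}^N\mathrm{var}(W_k)\big)^2$; the uniform upper bound on $\kappa_4(W_k)$ bounds the numerator by $O(N)$ and the uniform lower bound on $\mathrm{var}(W_k)$ bounds the denominator below by $(cN)^2$, so $\kappa_4(F_N)=O(1/N)\to0$. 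The same two-sided bounds give $\mathrm{var}(Y_N)\asymp1/N$. The delta method applied to $g(y)=y^{-1/(2H)}$, with $g(\alpha^{-2H})=\alpha$ and $g'(\alpha^{-2H})=-\frac{\alpha^{1+2H}}{2H}$, then converts the resulting central limit theorem for $Y_N$ into \eqref{eq: AN stat discr}.

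Finally, for the \textbf{explicit variance} under $\theta_k\to\infty$ I would use the decay of the fOU autocovariance: for each fixed lag $t\ne t'$, $\rho(\alpha\theta_k(t-t'))\to0$, so in the variance formula only the $n$ diagonal terms survive and $\mathrm{var}(W_k)\to\frac{2\rho(0)^2}{n\alpha^{4H}}=\frac{2(H\Gamma(2H))^2}{n\alpha^{4H}}$. Cesàro summation gives $N\,\mathrm{var}(Y_N)=\frac{1}{N(H\Gamma(2H))^2}\sum_{k=1}^N\mathrm{var}(W_k)\to\frac{2}{n\alpha^{4H}}$, so $\sqrt{N}(Y_N-\alpha^{-2H})\to\mathcal{N}(0,\frac{2}{n\alpha^{4H}})$; the delta method with $(g'(\alpha^{-2H}))^2=\frac{\alpha^{2+4H}}{4H^2}$ produces the limiting variance $\frac{\alpha^{2+4H}}{4H^2}\cdot\frac{2}{n\alpha^{4H}}=\frac{\alpha^2}{2nH^2}$, which is \eqref{eq: AN stat discr with variance}. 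The main obstacle, I expect, is the uniform-in-$k$ two-sided control of $\mathrm{var}(W_k)$ and $\kappa_4(W_k)$ underlying both $\kappa_4(F_N)\to0$ and $\mathrm{var}(Y_N)\asymp1/N$; the clean resolution is that for fixed $n$ the problem is genuinely finite-dimensional, so Isserlis supplies the lower bound from the diagonal Wick terms and uniform upper bounds from $|\rho|\le\rho(0)$. Notably, this makes the whole argument insensitive to the value of $H\in(0,1)$, so the singular regime $H<1/2$ requires no separate treatment here.
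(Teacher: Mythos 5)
Your proposal is correct and follows essentially the same route as the paper: Kolmogorov's SLLN applied to the independent weighted coordinate statistics for strong consistency, the fourth-moment theorem with additivity of cumulants over independent summands for asymptotic normality of $Y_N$, the delta method with $g(y)=y^{-1/(2H)}$, and self-similarity plus the decay of the canonical fOU autocovariance to identify $\lim_N N\,\mathrm{var}(Y_N)=\frac{2}{n\alpha^{4H}}$ when $\theta_k\to\infty$. The only cosmetic difference is that you obtain the uniform two-sided bounds on $\mathrm{var}(W_k)$ and $\kappa_4(W_k)$ directly from the Isserlis--Wick formulas in fixed dimension $n$, whereas the paper imports the $\kappa_4$ bound from Eq.~(23) of \cite{KrizMaslowski}; both yield the same $O(1/N)$ estimate.
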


\begin{proof}
Let us start with the strong consistency. Write
\[
Y_N - \mathbb{E}Y_N = \frac{1}{H \Gamma(2H)N} \sum_{k=1}^{N} Q_k - \mathbb{E}Q_k,
\]
where $Q_k = \frac{\theta_k^{2H}}{\sigma_k^2}\frac{1}{n}\sum_{t=1}^{n}z_k(t)^2$. Denote $r_k(i) := \mathbb{E}z_k(t+i)z_k(t)$ and, in view of the Kolmogorov strong law of large numbers (denote SLLN, see e.g. \cite{Shiryaev1996} for details), calculate
\begin{equation}\label{eq: calc SLLN cond}
\begin{aligned}[b]
&\sum_{k=1}^{\infty}\frac{\text{var}(Q_k)}{k^2} = \sum_{k=1}^{\infty}\frac{\frac{\theta_k^{4H}}{\sigma_k^4}\frac{2}{n} \sum_{i=-(n-1)}^{n-1}\left(1-\frac{|i|}{n}\right)r_k(i)^2}{k^2} \\
& \leq \sum_{k=1}^{\infty}\frac{\frac{\theta_k^{4H}}{\sigma_k^4}\; 4 \; r_k(0)^2}{k^2} = \frac{4 H^2 \Gamma(2H)^2}{\alpha^{4H}}\sum_{k=1}^{\infty}\frac{1}{k^2} < \infty.
\end{aligned}
\end{equation}
SLLN thus implies
\[
Y_N - \mathbb{E}Y_N  \overset{N\to \infty}{\longrightarrow} 0 \quad \text{a.s.}.
\]
Since $\mathbb{E}Y_N = \alpha^{-2H}$, the strong consistency is now immediate:
\[
\alpha^{*}_N = (Y_N)^{-\frac{1}{2H}}\overset{N\to \infty}{\longrightarrow} (\alpha^{-2H})^{-\frac{1}{2H}} =  \alpha \quad \text{a.s..}
\]

To explore asymptotic behavior of $\text{var}(Y_N)$, start with calculation
\begin{equation}\label{eq: s_k stat disc}
s_k^2 := \text{var} \left( \frac{1}{n} \sum_{t=1}^{n}z_k(t)^2\right) = \frac{2}{n} \sum_{i=-(n-1)}^{n-1}\left(1-\frac{|i|}{n}\right)r_k(i)^2 \asymp \frac{\sigma_k^4}{ \theta_k^{4H}}.
\end{equation}
Consequently,
\[
\text{var}(Y_N) = \frac{1}{(NH\Gamma(2H))^2} \sum_{k=1}^{N} \frac{\theta_k^{4H}}{\sigma_k^4} s_k^2  \asymp \frac{1}{N}.
\]
Next step is to show the asymptotic normality of $Y_N$ using the $4^{th}$ moment theorem. Calculation of the corresponding $4^{th}$ cumulant benefits from the independence of the coordinates:
\begin{equation}
\begin{aligned}[b]
&\kappa_{4}\left(\frac{Y_N - \alpha^{-2H}}{\sqrt{\text{var}(Y_N)}}\right) = \frac{1}{(\text{var}(Y_N))^2} \frac{1}{(N H \Gamma(2H))^4} \; \sum_{k=1}^{N} \frac{\theta_k^{8H}}{\sigma_k^8} \frac{1}{n^4} \; \kappa_{4}\left(\sum_{t=1}^{n}z_k(t)^2 - r_k(0)\right).
\end{aligned}
\end{equation}
Next, use the upper bound for the $4^{th}$ cumulant derived in \cite{KrizMaslowski}. In particular, Eq. (23) therein applied to the 1-dimensional Gaussian processes $z_k$ (so that $|Q(i)|_{\mathcal{L}_2}$ is replaced by $|r_k(i)|$) yields
\[
\kappa_{4}\left(\sum_{t=1}^{n}z_k(t)^2 - r_k(0)\right) \leq n C_2 \left(\sum_{i=-(n-1)}^{n-1}|r_k(i)|^{\frac{4}{3}}\right)^3 \leq 8 n^4 C_2 \frac{\sigma_k^8}{(\alpha \theta_k)^{8H}}(H \Gamma(2H))^4.
\]
This results in
\begin{equation}\label{eq: 4 kumul discr diag}
\kappa_{4}\left(\frac{Y_N - \alpha^{-2H}}{\sqrt{\text{var}(Y_N)}}\right) \leq \frac{C}{N},
\end{equation}
for some constant $C$ independent of $N$.

Proposition \ref{prop: 4th moment} now provides the upper bound for the total-variation distance from the $\mathcal{N}(0,1)$-distributed random variable $U$:
\begin{equation}\label{eq: dTV Y discr diag}
d_{TV}\left(\frac{Y_N - \alpha^{-2H}}{\sqrt{\text{var}(Y_N)}}, U \right) \leq \frac{C}{\sqrt{N}}.
\end{equation}
Application of the well-known delta method to mapping $g(x) = x^{-\frac{1}{2H}}$ results in the asymptotic normality of $\alpha^{*}_N$.\\

Now assume $\lim_{k\to \infty} \theta_k = \infty$. Observe that the self-similarity property \eqref{eq: self-similarity} implies
\begin{equation}\label{eq:r_k via r}
r_k(t) = \frac{\sigma_k^2}{(\alpha \theta_k)^{2H}} r(\alpha \theta_k t),
\end{equation}
where
\[
r(t) = e^{-t}H\Gamma(2H) + e^{-t} H (2H-1) \int_0^t \int_{-\infty}^0 e^{s+r}(s-r)^{2H-2} dr ds
\]
stands for the auto-covariance function of the canonical Ornstein-Uhlenbeck process. We can utilize the calculations from the strong consistency and extend them by employing the self-similarity property as follows
\begin{equation}
\begin{aligned}[b]
&\text{var}(Y_N) = \frac{\sum_{k=1}^{N} \text{var}(Q_k)} { H^2 \Gamma(2H)^2 N^2} = \frac{\sum_{k=1}^{N} \frac{\theta_k^{4H}}{\sigma_k^4}\frac{2}{n} \sum_{i=-(n-1)}^{n-1}\left(1-\frac{|i|}{n}\right)r_k(i)^2} { H^2 \Gamma(2H)^2 N^2} \\
&= \frac{\sum_{k=1}^{N} \frac{\theta_k^{4H}}{\sigma_k^4}\frac{2}{n} \sum_{i=-(n-1)}^{n-1}\left(1-\frac{|i|}{n}\right)\frac{\sigma_k^4}{(\alpha \theta_k)^{4H}} r(\alpha \theta_k i)^2} { H^2 \Gamma(2H)^2 N^2}
\end{aligned}
\end{equation}
Since $r(0) = H \Gamma(2H)$ and $\lim_{t \to \infty} r(t) = 0$, we have
\begin{equation}\label{eq: consist constant}
\lim_{N \to \infty} N \text{var}(Y_N) = \frac{2}{n}\frac{1}{\alpha^{4H}}.
\end{equation}
Combining this relation with \eqref{eq: AN stat discr} yields the expression \eqref{eq: AN stat discr with variance} with explicit asymptotic variance.
\end{proof}

Note that the asymptotic normality in space holds for any $H \in (0,1)$. This contrasts the asymptotic normality in time ($n \to \infty$) of this type of estimators, which is violated for $H > \frac{3}{4}$ (see e.g. \cite{SebaiyViens2016}, \cite{Sottinen-Vitasaary} or \cite{Hu-Nualart-Zhou2017}) due to the strong long-range dependence.\\

Moreover, if $H=1/2$ the achieved speed of convergence of the weighted MCE $\frac{1}{\sqrt{N}}$ is the best possible speed for the drift estimator one can get having discrete-time data with fixed number of time instants  (same speed as asymptotically efficient MLE, see \cite{Piterbarg-Rozovskii1997} or Section \ref{sec: comparison} below). We conjecture that this speed is best possible even in the fractional case ($H\neq 1/2$), which follows from the Cramer-Rao bound, because the coordinates are independent and each carries same amount of information about the unknown parameter (in contrast to the continuous-time setting, where the dilatation of time occurs, cf. \eqref{eq: self-similarity for integrals}, which enables faster decay, cf. Theorem \ref{Thm: space asympt - stat cont}).

\begin{Remark}
It is possible to use Proposition 6.1 from \cite{KrizMaslowski} to get the Berry-Esseen bounds for $\alpha^{*}_N$ on compacts. In particular, for each $K>0$ there exists a constant $C_{K} > 0$ such that
        \begin{equation}\label{eq: BE stat discr}
        \sup_{z\in [-K,K]} \biggl|\mathbb{P}\biggl(\frac{\alpha^{*}_N - \alpha}{\frac{\alpha^{1+2H}}{2H}\sqrt{\text{var}(Y_N)}} \leq z\biggr) - \mathbb{P}\biggl(U \leq z\biggr) \biggr| \leq  C_{K} \frac{1}{\sqrt{N}}.
        \end{equation}
\end{Remark}

\begin{Remark}\label{rem: time asymptot discr stat}
In contrast to previous setting, fix now the number of observed coordinates $N$ and consider the time (long-span) asymptotics of $\alpha^{*}_{N}$ ($n \to \infty$, fixed time step). Observe that $\alpha^{*}_{N}$ can be considered as the (non-weighted) minimum-contrast estimator constructed from the fractional Ornstein-Uhlenbeck process $U^{(N)}(t) = \sum_{k=1}^{N} \frac{\theta_k^H}{\sigma_k}z_k(t) e_k$. Hence, we can directly use \cite{KrizMaslowski} to see that:
\begin{itemize}
	\item $\alpha^{*}_{N}$ are strongly consistent as $n \to \infty$.
	\item If $H<\frac{3}{4}$, $\alpha^{*}_{N}$ are asymptotically normal as $n \to \infty$ with $\text{var}(\alpha^{*}_{N} - \alpha) = O(\frac{1}{n})$.
\end{itemize}
\end{Remark}

\begin{Remark}\label{MCE - 2 term drift}
Recall the more general equation with two-term drift operator from Remark \ref{remark: 2 operators}. In this case, Eq. \eqref{eq: r_k(0)} becomes
\begin{equation}
\mathbb{E} z_k(t)^2 = \frac{\sigma_k^2}{(\alpha \theta_k + \nu_k)^{2H}}H\Gamma(2H)
\end{equation}
and we can define the weighted minimum-contrast estimator $\hat{\alpha}$ as the (unique) solution to the equation
\begin{equation}\label{eq: wMCE - 2 term drift}
\frac{\sum_{k=1}^{N} \frac{(\hat{\alpha} \theta_k + \nu_k)^{2H}}{\sigma_k^2} \frac{1}{n} \sum_{t=1}^{n}  z_k(t)^2}{N H \Gamma(2H)} = 1.
\end{equation}
For the sake of simplicity, detailed inspection of this estimator is beyond the scope of this paper.
\end{Remark}

\subsection{Continuous-time observations}
Observations of processes $z_k(t)$ in continuous time window $t \in [0,T]$ are considered in this section. Straightforward modification of the estimator \eqref{eq: def rew_MC stat discr} (substituting sums by integrals) would preserve all properties specified in Theorem \ref{Thm: space asympt - stat discr}. However, if $\theta_k \overset{k \to \infty}{\longrightarrow} \infty$ (a typical situation for differential operators on smooth bounded domains), we can further improve the estimator. Recall the self-similarity property \eqref{eq: self-similarity}:
\[
Law\biggl(z_k(t): t \in [0,T] \biggr) = Law\biggl(\frac{\sigma_k}{(\alpha \theta_k)^H}z(\alpha \theta_k t): t \in [0,T] \biggr).
\]
Change of variable leads to:
\begin{equation}\label{eq: self-similarity for integrals}
Law \biggl( \frac{1}{T} \int_{0}^{T}z_k(t)^2 dt \biggr) = Law\biggl(\frac{\sigma_k^2}{(\alpha \theta_k)^{2H}}  \frac{1}{\alpha \theta_k T} \int_{0}^{\alpha \theta_k T} z(t)^2 dt \biggr).
\end{equation}
Thus, increasing $\theta_k$ changes not only the scale of values, but also increases the time horizon of the process $z$ (understood in law), which is $\alpha \theta_k T$. To make use of this increasing time horizon (dilatation of time), weights should be growing faster compared to the discrete time case.\\

To derive appropriate weights, consider a weighted MCE with general weights constructed from
\begin{equation}\label{eq: rew_MC gen weights cont}
Y_N(w_1,..,w_N) := \frac{\sum_{k=1}^{N} w_k \frac{1}{T} \int_{0}^{T}z_k(t)^2 dt} {H \Gamma(2H)\sum_{k=1}^{N} w_k \frac{\sigma_k^2}{\theta_k^{2H}}}.
\end{equation}
Obviously $\mathbb{E} Y_N(w_1,..,w_N) = \alpha^{-2H}$. Set the weights $w_1,\ldots,w_N$ in order to minimize the variance
\begin{equation}\label{eq: var Y_N cont}
\text{var} (Y_N(w_1,..,w_N)) = \frac{\sum_{k=1}^{N} w_k^2 s_k^2} {\left( H \Gamma(2H)\sum_{k=1}^{N} w_k \frac{\sigma_k^2}{\theta_k^{2H}} \right)^2},
\end{equation}
where
\[
s_k^2 = \text{var} \left(\frac{1}{T} \int_{0}^{T}z_k(t)^2 dt \right).
\]
The optimum solution is
\[
w_k^{(\text{opt})} = \frac{\sigma_k^2}{\theta_k^{2H}} \frac{1}{s_k^2},
\]
and (using the self-similarity and change of variable)
\[
s_k^2 = \frac{\sigma_k^4}{(\alpha \theta_k)^{4H}} \frac{4}{\alpha \theta_k T} \int_{0}^{\alpha \theta_k T}r(s)^2\left(1-\frac{s}{\alpha \theta_k T} \right) ds,
\]
where $r(s) = \mathbb{E}z(s)z(0)$ is the auto-covariance function of the canonical fractional Ornstein-Uhlenbeck process. In \cite{Cheridito_et_al2003}, it is shown that
\[
r(s) = H(2H-1)s^{2H-2} + O(s^{2H-4}), \quad s \to \infty.
\]
Consequently, if  $\theta_k \overset{k \to \infty}{\longrightarrow} \infty$, dilatation of time results in different forms of the weighted MCEs $\alpha^{*}_N$ for the Hurst index $H \in (0,3/4)$, $H = 3/4$ and $H \in (3/4,1)$ due to different time asymptotics of $s^2_k$:
\begin{itemize}
	\item For $0<H<\frac{3}{4}$ we have $s_k^2 \asymp \frac{\sigma_k^4}{\theta_k^{4H+1}}$ as $k \to \infty$, and so we set $w_k = \frac{\theta_k^{2H+1}}{\sigma_k^2}$ and
	\begin{equation}\label{eq: def rew_MC stat cont I}
		\alpha^{*}_N := Y_N^{-\frac{1}{2H}} =  \left(\frac{\sum_{k=1}^{N} \frac{\theta_k^{2H+1}}{\sigma_k^2} \frac{1}{T} \int_{0}^{T}  z_k(t)^2 dt}{ H \Gamma(2H) \sum_{k=1}^{N} \theta_k }\right)^{-\frac{1}{2H}}  .
	\end{equation}

	\item For $H=\frac{3}{4}$ we have $s_k^2 \asymp \frac{\sigma_k^4}{\theta_k^{4}} \ln(\theta_k T)$ as $k \to \infty$, we define $w_k = \frac{\theta_k^{\frac{5}{2}}}{\sigma_k^2 \ln(\theta_k T)}$ and
	\begin{equation}\label{eq: def rew_MC stat cont II}
		\alpha^{*}_N := Y_N^{-\frac{1}{2H}} =  \left(\frac{\sum_{k=1}^{N} \frac{\theta_k^{\frac{5}{2}}}{\sigma_k^2 \ln(\theta_k T)} \frac{1}{T} \int_{0}^{T}  z_k(t)^2 dt}{ \frac{3}{4} \Gamma(\frac{3}{4}) \sum_{k=1}^{N} \frac{\theta_k}{\ln(\theta_k T)} }\right)^{-\frac{1}{2H}}.
	\end{equation}

	\item For $\frac{3}{4} < H < 1$ we have $s_k^2 \asymp \frac{\sigma_k^4}{\theta_k^{4}}$ as $k \to \infty$, we set $w_k = \frac{\theta_k^{4-2H}}{\sigma_k^2}$ and
	\begin{equation}\label{eq: def rew_MC stat cont III}
		\alpha^{*}_N := Y_N^{-\frac{1}{2H}} = \left(\frac{\sum_{k=1}^{N} \frac{\theta_k^{4-2H}}{\sigma_k^2} \frac{1}{T} \int_{0}^{T}  z_k(t)^2 dt}{ H \Gamma(2H) \sum_{k=1}^{N} \theta_k^{4-4H} }\right)^{-\frac{1}{2H}}.
	\end{equation}
\end{itemize}

\begin{Remark}
The corresponding optimization procedure applied in the discrete-time setting leads exactly to the estimator \eqref{eq: def rew_MC stat discr}.
\end{Remark}

\begin{Theorem}\label{Thm: space asympt - stat cont}
Assume \eqref{eq: stat condition} holds (i.e. stationary solution exists),  $\theta_k \overset{k \to \infty}{\longrightarrow} \infty$ and consider the weighted minimum-contrast estimators $\alpha^{*}_N$ defined for a stationary solution in \eqref{eq: def rew_MC stat cont I}, \eqref{eq: def rew_MC stat cont II} and \eqref{eq: def rew_MC stat cont III}. These estimators are \textbf{strongly consistent} in space, i.e.
        \begin{equation}\label{eq: consist stat cont}
        \alpha^{*}_N \overset{N\to \infty}{\longrightarrow} \alpha \quad \text{a.s.}.
        \end{equation}
In addition, assume
\begin{equation}\label{eq: assumption AS}
\begin{aligned}[b]
&\biggl(H \leq \frac{3}{4}\biggr), \quad \text{or} \quad \biggl(\frac{3}{4} < H <1 \text{ and } \theta_k \asymp k^\beta \text{ for some } \beta > 0 \biggr).
\end{aligned}
\end{equation}
Then $\alpha^{*}_N$ are \textbf{asymptotically normal}, i.e.
        \begin{equation}\label{eq: AN stat cont}
       \frac{\alpha^{*}_N - \alpha}{\frac{\alpha^{1+2H}}{2H}\sqrt{\text{var}(Y_N)}} \overset{N \to \infty}{\longrightarrow} U \sim \mathcal{N}(0,1) \quad \text{in distribution},
        \end{equation}
with
\begin{equation}\label{eq: Y_N asymp variance cont}
        \text{var}(Y_N) \asymp
	       \begin{cases}
        	\frac{1}{\sum_{k=1}^{N}\theta_k} \quad \text{for }0<H<\frac{3}{4},\\
	        \frac{1}{\sum_{k=1}^{N}\frac{\theta_k}{\ln(\theta_k T)}} \quad \text{for }H=\frac{3}{4},\\
            \frac{1}{\sum_{k=1}^{N}\theta_k^{4-4H}} \quad \text{for }\frac{3}{4}<H<1.
	       \end{cases}
        \end{equation}
\end{Theorem}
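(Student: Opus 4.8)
The plan is to mirror the architecture of the proof of Theorem~\ref{Thm: space asympt - stat discr}, handling the three weight regimes at once through the abbreviations $Q_k := w_k\,\frac{1}{T}\int_0^T z_k(t)^2\,dt$ and $D_N := \sum_{k=1}^N w_k\frac{\sigma_k^2}{\theta_k^{2H}}$, so that $Y_N=(H\Gamma(2H)D_N)^{-1}\sum_{k=1}^N Q_k$, $\mathbb{E}Y_N=\alpha^{-2H}$ and $v_k:=\mathrm{var}(Q_k)=w_k^2 s_k^2$. The one computation I would record first and reuse throughout is that, for each of the three choices of $w_k$, the stated asymptotics of $s_k^2$ force $v_k\asymp c_k$, where $c_k$ is precisely the $k$-th summand of $D_N$ (namely $\theta_k$, $\theta_k/\ln(\theta_k T)$, or $\theta_k^{4-4H}$); in particular $\sum_{k=1}^N v_k\asymp D_N\to\infty$.

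For strong consistency I would apply the Kolmogorov SLLN to the independent summands $Q_k$ with the deterministic normalizer $b_k:=D_k\uparrow\infty$. The defining series converges by a telescoping estimate,
\[
\sum_{k}\frac{v_k}{D_k^2}\le C\sum_k\frac{c_k}{D_{k-1}D_k}=C\sum_k\Big(\frac{1}{D_{k-1}}-\frac{1}{D_k}\Big)<\infty ,
\]
whence $D_N^{-1}\sum_{k=1}^N(Q_k-\mathbb{E}Q_k)\to0$ a.s.\ and $Y_N\to\alpha^{-2H}$ a.s.; the continuous map $g(x)=x^{-1/2H}$ then yields \eqref{eq: consist stat cont}. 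The variance statement \eqref{eq: Y_N asymp variance cont} follows at once from $\mathrm{var}(Y_N)=(H\Gamma(2H)D_N)^{-2}\sum_{k=1}^N v_k\asymp D_N^{-1}$.

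For asymptotic normality I would invoke Proposition~\ref{prop: 4th moment}: the normalized statistic sits in the second Wiener chaos, and independence of the coordinates collapses its fourth cumulant to
\[
\kappa_4\!\left(\frac{Y_N-\alpha^{-2H}}{\sqrt{\mathrm{var}(Y_N)}}\right)=\frac{\sum_{k=1}^N\kappa_4(Q_k)}{\big(\sum_{k=1}^N v_k\big)^2}=\frac{\sum_{k=1}^N \varepsilon_k v_k^2}{\big(\sum_{k=1}^N v_k\big)^2},\qquad \varepsilon_k:=\frac{\kappa_4(Q_k)}{v_k^2}.
\]
The self-similarity \eqref{eq: self-similarity for integrals} lets me replace $\frac1T\int_0^T z_k^2$ by $\frac{\sigma_k^2}{(\alpha\theta_k)^{2H}}G_k$, with $G_k$ the time-average of the squared canonical fOU over the dilated horizon $T_k=\alpha\theta_k T\to\infty$; all prefactors cancel in $\varepsilon_k=\kappa_4(G_k)/\mathrm{var}(G_k)^2$, which is exactly the normalized fourth cumulant of the one-coordinate long-span statistic. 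Feeding the tail $r(s)=H(2H-1)s^{2H-2}+O(s^{2H-4})$ of \cite{Cheridito_et_al2003} into the one-dimensional form of the $L^{4/3}$ cumulant bound of \cite{KrizMaslowski} (its Eq.~(23)), I would read off the dichotomy that drives the whole theorem: $\varepsilon_k\to0$ when $H\le\frac34$ (the Breuer--Major regime, with only a logarithmic boundary correction at $H=\frac34$), whereas $\varepsilon_k\asymp1$ when $H>\frac34$, reflecting the known failure of the one-coordinate temporal CLT there.

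The decisive step, and the real obstacle, is to turn this per-coordinate information into decay of the aggregated cumulant, and here the two regimes part ways. When $H\le\frac34$ no growth hypothesis is needed: splitting the sum at a fixed index $K$ and using the elementary bound $\sum_{k}v_k^2\le(\max_k v_k)\sum_k v_k\le(\sum_k v_k)^2$, one gets $\limsup_N\kappa_4(\cdots)\le\sup_{k>K}\varepsilon_k$ for every $K$, which vanishes as $K\to\infty$ because $\varepsilon_k\to0$. When $H>\frac34$ this argument breaks down since $\varepsilon_k\not\to0$, and the cumulant is comparable to the Feller--Lyapunov negligibility quotient $\max_{k\le N}v_k\big/\sum_{k\le N}v_k$ with $v_k\asymp\theta_k^{4-4H}$ and $4-4H\in(0,1)$; this quotient tends to zero precisely when no single coordinate can dominate the pool, which is guaranteed by the polynomial growth $\theta_k\asymp k^\beta$ imposed in \eqref{eq: assumption AS} and would genuinely fail for, say, super-exponentially spaced eigenvalues. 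In every admissible case $\kappa_4(\cdots)\to0$, so Proposition~\ref{prop: 4th moment} gives the asymptotic normality of $Y_N$, and the delta method applied to $g(x)=x^{-1/2H}$, with $g'(\alpha^{-2H})=-\tfrac{1}{2H}\alpha^{1+2H}$, delivers \eqref{eq: AN stat cont}.
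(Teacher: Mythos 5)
Your proposal is correct, and its skeleton matches the paper's: Kolmogorov SLLN with the same telescoping trick for strong consistency, the $L^{4/3}$ fourth-cumulant bound of \cite{KrizMaslowski} fed by the covariance tail from \cite{Cheridito_et_al2003}, Proposition \ref{prop: 4th moment}, and the delta method with $g(x)=x^{-1/(2H)}$. Where you genuinely diverge is the step that aggregates per-coordinate cumulant information. The paper derives an explicit majorant $\zeta(N)$ in five $H$-regimes (see \eqref{eq: def zeta(N)}) and proves $\zeta(N)\to 0$ case by case: H\"older's inequality for $H<\frac{3}{4}$, the Chebyshev sum inequality \eqref{eq: Chebyshev sum inequality} followed by H\"older at $H=\frac{3}{4}$, and a direct computation under $\theta_k\asymp k^\beta$ for $H>\frac{3}{4}$. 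You instead normalize per coordinate, $\varepsilon_k=\kappa_4(Q_k)/v_k^2$, identify $\varepsilon_k$ via self-similarity with the long-span normalized cumulant of the canonical fOU at horizon $S_k=\alpha\theta_k T$, and settle the whole range $H\le\frac{3}{4}$ with the soft splitting bound $\limsup_N \kappa_4 \le \sup_{k>K}\varepsilon_k$, using only $\theta_k\to\infty$; for $H>\frac{3}{4}$ your Lyapunov-type quotient $\max_{k\le N}v_k\big/\sum_{k\le N}v_k$ with $v_k\asymp\theta_k^{4-4H}$ is, after unwinding, exactly the quantity the paper computes. Your route buys compactness and a little extra generality: the identity $v_k\asymp c_k$ (the $k$-th summand of the normalizer $D_N$) treats all three weight regimes uniformly, gives \eqref{eq: Y_N asymp variance cont} in one line, and your splitting argument avoids the monotonicity of $\theta_k$ that the paper's application of \eqref{eq: Chebyshev sum inequality} at $H=\frac{3}{4}$ implicitly relies on (harmless there, since the relevant sums are symmetric in $k$ and can be rearranged). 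What it gives up is the explicit rate: the paper's $\zeta(N)$ is precisely what powers the Berry--Esseen bound \eqref{eq: BE stat cont} in the subsequent remark, whereas your argument is qualitative as written (though your estimates $\varepsilon_k=O(S_k^{-1})$, $O(S_k^{8H-6})$, $O(\ln^{-2}S_k)$ would recover rates if tracked). Two small corrections: the continuous-time cumulant bound is Eq.~(27) of \cite{KrizMaslowski}, not its discrete counterpart Eq.~(23); and for $H>\frac{3}{4}$ the $L^{4/3}$ bound yields only that $\varepsilon_k$ is bounded --- the two-sided claim $\varepsilon_k\asymp 1$ (the Rosenblatt regime) is plausible but unproved, and your argument needs only the upper bound.
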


\begin{proof}
For strong consistency, apply SLLN to $Y_N$. In particular, for $H<\frac{3}{4}$, note that
\[
H \Gamma(2H) \sum_{k=1}^{N} \theta_k \nearrow \infty \text{ as }N \to \infty
\]
and
\begin{equation}
\begin{aligned}[b]
	&\sum_{k=1}^{\infty}\frac{\text{var}\left( \frac{\theta_k^{2H+1}}{\sigma_k^2} \frac{1}{T} \int_{0}^{T}  z_k(t)^2 dt \right)}{\left( H \Gamma(2H) \sum_{m=1}^{k} \theta_m \right)^2} = \sum_{k=1}^{\infty}\frac{ \frac{\theta_k^{4H+2}}{\sigma_k^4} s_k^2}{\left( H \Gamma(2H) \sum_{m=1}^{k} \theta_m \right)^2}\\
	&\leq C  \sum_{k=1}^{\infty}\frac{ \theta_k}{\left( \sum_{m=1}^{k} \theta_m \right)^2} <\infty,
\end{aligned}
\end{equation}
where the convergence of the last series follows from the fact that for $k\geq 2$:
\[
\frac{ \theta_k}{\left( \sum_{m=1}^{k} \theta_m \right)^2} \leq \frac{ \theta_k}{\left( \sum_{m=1}^{k} \theta_m \right)\left( \sum_{m=1}^{k-1} \theta_m \right)} = \frac{1}{\sum_{m=1}^{k-1} \theta_m} - \frac{1}{\sum_{m=1}^{k} \theta_m},
\]
which leads to the telescopic series
\[
\sum_{k=1}^{\infty}\frac{ \theta_k}{\left( \sum_{m=1}^{k} \theta_m \right)^2} \leq \frac{1}{\theta_1} + \left(\frac{1}{\theta_1} - \frac{1}{\theta_1+\theta_2} \right) + \left(\frac{1}{\theta_1+\theta_2} - \frac{1}{\theta_1+\theta_2 +\theta_3} \right)+ \ldots = \frac{2}{\theta_1}.
\]
This verifies the assumptions of the SLLN and the almost-sure convergence of $Y_N$ and strong consistency of $\alpha^{*}_N$ are guaranteed.\\

If $H=\frac{3}{4}$ or $\frac{3}{4}< H < 1$, strong consistency of $\alpha^{*}_N$ can be proved similarly, with $\theta_k$ being replaced with $\frac{\theta_k}{\ln(\theta_k T)}$ or with $\theta_k^{4-4H}$ in the conditions for SLLN above.\\

To show \eqref{eq: Y_N asymp variance cont}, combine Eq. \eqref{eq: var Y_N cont} with formulas for $w_k$ and asymptotic formulas for $s_k^2$, separately for the case $0<H<\frac{3}{4}$, $H=\frac{3}{4}$ and $\frac{3}{4} < H < 1$.\\

For asymptotic normality, start with calculations similar to the discrete-time case (using formula (27) from \cite{KrizMaslowski}):
\begin{equation}
\begin{aligned}[b]
&\kappa_{4}\left(\frac{Y_N - \alpha^{-2H}}{\sqrt{\text{var}(Y_N)}}\right) = \frac{1}{(\sum_{k=1}^{N} w_k^2 s_k^2)^2}
\sum_{k=1}^{N} w_k^4 \; \kappa_{4} \left(\frac{1}{T} \int_{0}^{T} z_k(t)^2 dt - r_k(0) \right) \\
&\leq \frac{1}{(\sum_{k=1}^{N} w_k^2 s_k^2)^2}
\sum_{k=1}^{N} w_k^4 \frac{\tilde{C}}{T^3} \left(\int_{-T}^{T} r_k(t)^{\frac{4}{3}} dt \right)^3.
\end{aligned}
\end{equation}
where $\tilde{C}$ is a universal constant. To proceed further, we use the formula \eqref{eq:r_k via r}, the change-of-variable formula and the upper bound for the covariance function of the canonical fractional Ornstein-Uhlenbeck process (see e.g. Lemma 5.2 in \cite{KrizMaslowski}):
\[
|r(t)| \leq \min\{r(0), C \; |t|^{2H-2}\}.
\]
Calculations of the integrals of the resulting power functions then lead to the following upper bounds:
\[
\kappa_{4}\left(\frac{Y_N - \alpha^{-2H}}{\sqrt{\text{var}(Y_N)}}\right) \leq
	\begin{cases}
	 \frac{C_1}{(\sum_{k=1}^{N} w_k^2 s_k^2)^2} \sum_{k=1}^{N}  \frac{w_k^4}{T^3} \frac{\sigma_k^8}{(\alpha \theta_k)^{8H+3}}    \quad \text{for }0<H<\frac{5}{8},\\[10pt]
	 \frac{C_2}{(\sum_{k=1}^{N} w_k^2 s_k^2)^2} \sum_{k=1}^{N}  \frac{w_k^4}{T^3} \frac{\sigma_k^8}{(\alpha \theta_k)^{8H+3}}\ln^3(\alpha \theta_k T) \quad \text{for }H=\frac{5}{8},\\[10pt]
	 \frac{C_3}{(\sum_{k=1}^{N} w_k^2 s_k^2)^2} \sum_{k=1}^{N}  \frac{w_k^4}{T^3} \frac{\sigma_k^8}{(\alpha \theta_k)^{8H+3}} (\alpha \theta_k T)^{8H - 5} \quad \text{for }\frac{5}{8}<H<1.
	\end{cases}
\]
If we combine these bounds with the corresponding formulas for $w_k$ and asymptotic formulas for $s_k^2$, we obtain
\[
\kappa_{4}\left(\frac{Y_N - \alpha^{-2H}}{\sqrt{\text{var}(Y_N)}}\right) \leq C \; \zeta(N),
\]
with
\begin{equation}\label{eq: def zeta(N)}
            \zeta(N) =
        	\begin{cases}
	           \frac{1}{T^3 \; \alpha^{8H+3}} \frac{1} {\sum_{k=1}^{N} \theta_k}    \quad \text{for }0<H<\frac{5}{8},\\[10pt]
	           \frac{1}{T^3 \; \alpha^{8H+3}} \frac{\sum_{k=1}^{N} \theta_k \ln^3(\alpha \theta_k T)} {(\sum_{k=1}^{N} \theta_k)^2}  \quad \text{for }H=\frac{5}{8},\\[10pt]
	           \frac{1}{T^{8-8H} \; \alpha^{8}} \frac{\sum_{k=1}^{N} \theta_k^{8H-4}} {(\sum_{k=1}^{N} \theta_k)^2} \quad \text{for }\frac{5}{8}<H<\frac{3}{4},\\[10pt]
	           \frac{1}{T^{2} \; \alpha^{8}} \frac{\sum_{k=1}^{N} \frac{\theta_k^{2}}{\ln^4(\theta_k T)}} {\left(\sum_{k=1}^{N}\frac{\theta_k}{\ln(\theta_k T)}\right)^2} \quad \text{for }H=\frac{3}{4},\\[10pt]
	           \frac{1}{T^{8-8H} \; \alpha^{8}} \frac{\sum_{k=1}^{N} \theta_k^{8-8H}} {(\sum_{k=1}^{N} \theta_k^{4-4H})^2} \quad \text{for }\frac{3}{4}<H<1.
	       \end{cases}
\end{equation}
Proposition \ref{prop: 4th moment} then yields the bound on the total-variation distance:
\[
d_{TV}\left(\frac{Y_N - \alpha^{-2H}}{\sqrt{\text{var}(Y_N)}},U\right) \leq C \; \sqrt{\zeta(N)}.
\]

Consider now the assumptions \eqref{eq: assumption AS} and show that $\zeta(N) \longrightarrow 0$ with $N \to \infty$. Indeed, in case $H<\frac{3}{4}$ this easily follows from the condition $\theta_k \overset{k\to \infty}{\longrightarrow} \infty$ combined with H\"older inequality.\\
For $H=\frac{3}{4}$, write
\[
\zeta(N) = \frac{1}{T^{2} \; \alpha^{8}} \frac{\sum_{k=1}^{N} \frac{\theta_k^{2}}{\ln^2(\theta_k T)} \frac{1}{\ln^2(\theta_k T)}} {\left(\sum_{k=1}^{N}\frac{\theta_k}{\ln(\theta_k T)}\right)^2}.
\]
Observe that for an increasing positive sequence $\{a_k\}$ and a decreasing positive sequence $\{b_k\}$ the following inequality holds (sometimes referred to as the Chebyshev's sum inequality)
\begin{equation}\label{eq: Chebyshev sum inequality}
\sum_{k=1}^{N}a_k b_k = \sum_{k=1}^{N}a_k \bar{b} + \sum_{k=1}^{N}a_k (b_k - \bar{b}) \leq \bar{b} \sum_{k=1}^{N}a_k,
\end{equation}
where $\bar{b} = \frac{1}{N}\sum_{k=1}^{N} b_k$.\\

Indeed, take $K \in \{1,...,N-1\}$ such that
\begin{equation}
    b_k - \bar{b} > 0 \quad \text{for } k = 1,...,K \quad \text{and} \quad  b_k - \bar{b} \leq 0 \quad \text{for } k = K+1,...,N.
\end{equation}
Since $\{a_k\}$ is increasing and positive, we obtain
\begin{equation}
    \sum_{k=1}^{N}a_k (b_k - \bar{b}) \leq \sum_{k=1}^{K}a_K (b_k - \bar{b}) + \sum_{k=K+1}^{N}a_K (b_k - \bar{b}) = a_K \sum_{k=1}^{N}(b_k - \bar{b}) = 0.
\end{equation}

Now apply \eqref{eq: Chebyshev sum inequality} to the numerator:
\[
\zeta(N) \leq \frac{1}{T^{2} \; \alpha^{8}} \frac{\left(\sum_{k=1}^{N} \frac{\theta_k^{2}}{\ln^2(\theta_k T)} \right) \left(\frac{1}{N}\sum_{k=1}^{N} \frac{1}{\ln^2(\theta_k T)}\right)} {\left(\sum_{k=1}^{N}\frac{\theta_k}{\ln(\theta_k T)}\right)^2}.
\]
H\"older inequality then completes the proof:
\[
\zeta(N) \leq \frac{1}{T^{2} \; \alpha^{8}} \frac{1}{N}\sum_{k=1}^{N} \frac{1}{\ln^2(\theta_k T)} \overset{N\to \infty}{\longrightarrow} 0.
\]
For $H > \frac{3}{4}$, we can prove the convergence of $\zeta(N)$ by direct calculation using $\theta_k \asymp k^\beta$.\\

Having asymptotic normality of $Y_N$, the asymptotic normality of $\alpha^{*}_N$ is now a simple consequence of the delta method with $g(x) = x^{-\frac{1}{2H}}$.
\end{proof}

\begin{Remark}
By Proposition 6.1 in \cite{KrizMaslowski}, for each $K>0$ there exists a constant $C_{K} > 0$ such that we have local Berry-Esseen bound (consider $U \sim \mathcal{N}(0,1)$)
        \begin{equation}\label{eq: BE stat cont}
        \sup_{z\in [-K,K]} \biggl|\mathbb{P}\biggl(\frac{\alpha^{*}_N - \alpha}{\frac{\alpha^{1+2H}}{2H}\sqrt{\text{var}(Y_N)}} \leq z\biggr) - \mathbb{P}\biggl(U \leq z\biggr) \biggr| \leq  C_{K} \sqrt{\zeta(N)},
        \end{equation}
with $ \zeta(N)$ specified in \eqref{eq: def zeta(N)}.
\end{Remark}

\begin{Remark}
In discrete time case, we demonstrated that $\text{var}(Y_N) \asymp \frac{1}{N}$. Thus, continuous-time observations enable us to increase the speed of convergence of $\text{var}(Y_N)$ to zero if the weights are properly modified. This speed is given by \eqref{eq: Y_N asymp variance cont}. Note that this improvement is enabled by the fact that growing $\theta_k$ results in increasing amount information carried by the higher coordinates ($k \to \infty$) due to the corresponding time dilatation.
\end{Remark}

\begin{Remark}\label{rem: nonAN for Y_N}
Note that for $H > \frac{3}{4}$, asymptotic normality was not proved in general. For example if $\theta_k = e^{k}$, $\zeta(N)$ will not converge to zero. In this case the weights grow so rapidly that the highest coordinates dominate in the estimator. This leads to insufficient mixing of (independent) coordinates. Moreover, recall that increasing $\theta_k$ acts (in law) as increasing time horizon (cf. \eqref{eq: self-similarity for integrals}) and the second sample moment of a real-valued fractional Ornstien-Uhlenbeck process with $H>3/4$ converges to the Rosenblatt distribution with increasing time horizon (see e.g. \cite{SebaiyViens2016}). This suggests that one might expect limiting Rosenblatt distribution for $Y_N$ as $N\to \infty$ in case of rapidly growing $\theta_k$ and $H>3/4$. However, detailed investigation of such situation is outside the scope of this article.\\

Interestingly, the estimator \eqref{eq: def rew_MC stat discr} constructed for discrete time observations, converges to normal distribution even in this rapidly-growing $\theta_k$ example, because Theorem \ref{Thm: space asympt - stat discr} does not impose any additional requirements on $\theta_k$. On the other hand, it exhibits lower speed of convergence (in terms of the variance) compared to the continuous-time estimator.\\
\end{Remark}

\section{Estimation in non-stationary case}\label{sec: estimation non-stat}

Space asymptotics of the weighted MCE calculated from a non-stationary solution to equation \eqref{eq: SPDE} with an initial condition \eqref{eq: init_Cond} is studied in this section. Although the construction of the weighted MCE relies on the second moment of the invariant distribution of the stationary solution, the acceleration of virtual time with growing $\theta_k$ (see the self-similarity property \eqref{eq: self-similarity}) can eliminate the effect of a non-stationary initial condition even in fixed time window. Thus, we can expect favorable space-asymptotic properties under some additional assumptions on the growth of $\theta_k$ and $x_k(0)$ (not present in the stationary case).

\subsection{Discrete time observations}
Let $x_k(t), k = 1,\ldots,N$ be the coordinates of a (non-stationary) solution as defined in Definition \ref{def: diag solution} and let these processes are observed in discrete time instants $t=1,\ldots,n$. Consider the weighted minimum-contrast estimator
\begin{equation}\label{eq: rew MC disc - nonStat}
\alpha^{*}_N := \left(\frac{\sum_{k=1}^{N} \frac{\theta_k^{2H}}{\sigma_k^2} \frac{1}{n} \sum_{t=1}^{n}  x_k(t)^2}{N H \Gamma(2H)}\right)^{-\frac{1}{2H}}.
\end{equation}

\begin{Theorem}\label{Thm: space asympt - nonstat discr}
Assume \eqref{eq: stat condition} throughout this theorem.\\
If the following conditions hold:\\
     $(D1) \quad \theta_k \overset{k\to \infty}{\longrightarrow} \infty$, and \\
     $(D2) \quad e^{-2\alpha \theta_k} \frac{\theta_k^{2H}}{\sigma_k^{2}} \mathbb{E}x_k^2(0) \overset{k\to \infty}{\longrightarrow} 0$, \\
then $\alpha^{*}_N$ is \textbf{weakly consistent} in space, i.e. $\alpha^{*}_N  \overset{N\to \infty}{\longrightarrow} \alpha $ in probability.\\

\noindent Let the conditions $(D1), (D2)$ and\\
     $(D3) \quad \sup_{k\in \mathbb{N}} \left(e^{-4\alpha \theta_k} \frac{\theta_k^{4H}}{\sigma_k^{4}} \mathbb{E}x_k^4(0) \right) < \infty$ \\
hold. Then $\alpha^{*}_N$ is \textbf{strongly consistent} in space, i.e. $\alpha^{*}_N  \overset{N\to \infty}{\longrightarrow} \alpha $ almost surely.\\

\noindent Assume there are some constants $C > 0$ and $\beta < -1$ so that\\
     $(D1') \quad e^{-2\alpha \theta_k} < C \; k^{\beta}$, and\\
     $(D2') \quad e^{-2\alpha \theta_k} \frac{\theta_k^{2H}}{\sigma_k^{2}} \mathbb{E}x_k^2(0) < C \; k^{\beta}$.\\
Then $\frac{\alpha^{*}_N - \alpha}{\left(\frac{\alpha}{2H}\sqrt{\frac{2}{n}}\right)\sqrt{\frac{1}{N}}}\overset{N \to \infty}{\longrightarrow} U \sim \mathcal{N}(0,1)$  in distribution.

\end{Theorem}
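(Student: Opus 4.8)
The plan is to reduce the non-stationary problem to the stationary one already settled in Theorem \ref{Thm: space asympt - stat discr}. Recall from the proof of Theorem \ref{thm: existence of stat solution} the decomposition $x_k(t) = z_k(t) + e^{-\alpha \theta_k t}\delta_k$ with $\delta_k := x_k(0) - z_k(0)$. Writing $Y_N$ for the quantity built from $x_k$ (so $\alpha^{*}_N = Y_N^{-1/(2H)}$) and $Y_N^{(z)}$ for the same expression with $x_k$ replaced by $z_k$, the latter is precisely the stationary quantity of Theorem \ref{Thm: space asympt - stat discr}. Since (D1') forces $\theta_k \to \infty$, the explicit-variance conclusion \eqref{eq: consist constant} applies and gives $\sqrt{N}(Y_N^{(z)} - \alpha^{-2H}) \to \mathcal{N}(0, 2/(n\alpha^{4H}))$. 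Expanding the square yields $Y_N - Y_N^{(z)} = R_N^{(1)} + R_N^{(2)}$, where
\begin{align*}
R_N^{(1)} &= \frac{2}{NH\Gamma(2H)}\sum_{k=1}^{N} \frac{\theta_k^{2H}}{\sigma_k^2}\frac{1}{n}\sum_{t=1}^{n} e^{-\alpha \theta_k t} z_k(t)\delta_k, \\
R_N^{(2)} &= \frac{1}{NH\Gamma(2H)}\sum_{k=1}^{N} \frac{\theta_k^{2H}}{\sigma_k^2}\frac{1}{n}\sum_{t=1}^{n} e^{-2\alpha \theta_k t}\delta_k^2.
\end{align*}
The whole argument then rests on showing $\sqrt{N}R_N^{(1)} \to 0$ and $\sqrt{N}R_N^{(2)} \to 0$ in probability, after which Slutsky's theorem transfers the stationary limit to $\sqrt{N}(Y_N - \alpha^{-2H})$ and the delta method with $g(x)=x^{-1/(2H)}$ closes the proof.

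For the quadratic remainder $R_N^{(2)}$ I would bound in $L^1$. Using $\mathbb{E}\delta_k^2 \leq 2\mathbb{E}x_k^2(0) + 2r_k(0)$ together with $e^{-2\alpha \theta_k t} \leq e^{-2\alpha \theta_k}$ for $t \geq 1$ and the identity \eqref{eq: r_k(0)} (so that $\frac{\theta_k^{2H}}{\sigma_k^2}r_k(0)$ is a constant), the $z_k(0)$-part is controlled by $e^{-2\alpha \theta_k} < C k^{\beta}$ through (D1') and the $x_k(0)$-part by $e^{-2\alpha \theta_k}\frac{\theta_k^{2H}}{\sigma_k^2}\mathbb{E}x_k^2(0) < C k^{\beta}$ through (D2'). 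Since $\beta < -1$, the series $\sum_k k^{\beta}$ converges, whence $\mathbb{E}|R_N^{(2)}| \leq C/N$ and $\sqrt{N}\,\mathbb{E}|R_N^{(2)}| \to 0$.

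The cross term is the delicate one and I expect it to be the main obstacle. By Cauchy--Schwarz, $\mathbb{E}|z_k(t)\delta_k| \leq \sqrt{r_k(0)}\sqrt{\mathbb{E}\delta_k^2}$ (this avoids any independence assumption on $x_k(0)$), which via \eqref{eq: r_k(0)} reduces $\mathbb{E}|R_N^{(1)}|$ to a constant times $\frac{1}{N}\sum_{k=1}^{N} \frac{\theta_k^{H}}{\sigma_k}e^{-\alpha \theta_k}\sqrt{\mathbb{E}\delta_k^2}$. The $x_k(0)$-contribution is $\frac{C}{N}\sum_{k=1}^{N} a_k$ with $a_k := \frac{\theta_k^{H}}{\sigma_k}e^{-\alpha \theta_k}\sqrt{\mathbb{E}x_k^2(0)}$, and here (D2') only yields $a_k^2 < C k^{\beta}$, i.e. $a_k < C k^{\beta/2}$, which for $\beta \in (-2,-1)$ is \emph{not} summable. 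The resolution is that the extra factor $\sqrt{N}$ still wins: $\sqrt{N}\,\mathbb{E}|R_N^{(1)}| \leq C N^{-1/2}\sum_{k=1}^{N} k^{\beta/2} \leq C N^{(1+\beta)/2} \to 0$ precisely because $\beta < -1$. The $z_k(0)$-contribution is already $O(k^{\beta})$ via (D1') and is handled as before. Markov's inequality then gives $\sqrt{N}R_N^{(1)} \to 0$ and $\sqrt{N}R_N^{(2)} \to 0$ in probability.

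Finally, combining $\sqrt{N}(Y_N - \alpha^{-2H}) = \sqrt{N}(Y_N^{(z)} - \alpha^{-2H}) + \sqrt{N}(R_N^{(1)}+R_N^{(2)})$ with Slutsky gives $\sqrt{N}(Y_N - \alpha^{-2H}) \to \mathcal{N}(0, 2/(n\alpha^{4H}))$. Since (D2') implies (D2), the estimator is weakly consistent, so $Y_N \to \alpha^{-2H}$ in probability; the delta method applied to $g(x)=x^{-1/(2H)}$, with $g'(\alpha^{-2H}) = -\frac{1}{2H}\alpha^{1+2H}$, then yields $\sqrt{N}(\alpha^{*}_N - \alpha) \to \mathcal{N}(0,\frac{\alpha^2}{2nH^2})$, which is exactly the stated normalization and reproduces the stationary formula \eqref{eq: AN stat discr with variance} despite the non-stationary initial condition.
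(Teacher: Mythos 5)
Your treatment of the asymptotic-normality claim is correct and is essentially the paper's own argument: the paper also compares $Y^{(x)}_N$ with the stationary $Y^{(z)}_N$, bounds $\mathbb{E}\bigl|Y^{(x)}_N-Y^{(z)}_N\bigr|$ via the decomposition $x_k(t)-z_k(t)=e^{-\alpha\theta_k t}(x_k(0)-z_k(0))$ and Cauchy--Schwarz (your split into the cross term $R^{(1)}_N$ and the quadratic term $R^{(2)}_N$ is the same algebra, with your bounds $\sqrt{D_k}\leq Ck^{\beta/2}$ and $N^{-1/2}\sum_{k\leq N}k^{\beta/2}\asymp N^{(1+\beta)/2}\to 0$ matching the paper's $C\,N^{-1/2}\sum_{k=1}^{N}\sqrt{D_k}\to 0$), then transfers the limit from Theorem \ref{Thm: space asympt - stat discr} via Slutsky and the delta method, with the variance bookkeeping through \eqref{eq: consist constant} exactly as you did. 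Your observation that the non-summable $k^{\beta/2}$ rate of the cross term is still beaten by the $\sqrt{N}$ normalization when $\beta<-1$ is precisely the point of conditions $(D1')$, $(D2')$.

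However, the theorem has three assertions and your proposal proves only the third. The weak-consistency claim holds under the strictly weaker hypotheses $(D1)$, $(D2)$, where you only know $D_k\to 0$ with no rate and no summability; your $L^1$ bounds as written rely on $\sum_k k^{\beta}<\infty$ and so do not cover this case. The fix (and the paper's route) is a Ces\`aro-mean argument: $D_k\to 0$ already forces $\frac1N\sum_{k=1}^{N}\bigl(D_k+C\sqrt{D_k}\bigr)\to 0$, hence $\mathbb{E}\bigl|Y^{(x)}_N-Y^{(z)}_N\bigr|\to 0$, which combined with consistency of $Y^{(z)}_N$ gives the claim. The strong-consistency claim under $(D1)$--$(D3)$ is entirely absent and cannot follow from $L^1$ estimates: almost-sure convergence of $Y^{(x)}_N-Y^{(z)}_N$ requires a separate argument, namely Kolmogorov's SLLN applied to $Q_k=\frac{\theta_k^{2H}}{\sigma_k^2}\frac1n\sum_{t=1}^{n}\bigl(x_k^2(t)-z_k^2(t)\bigr)$, with $(D1)$ and the fourth-moment condition $(D3)$ used to verify $\sup_k \mathrm{var}(Q_k)<\infty$ (this is exactly where $(D3)$ enters, and your proposal never uses it). Finally, your closing sentence ``since $(D2')$ implies $(D2)$, the estimator is weakly consistent'' appeals to the theorem's own unproven first claim and is circular as stated; within the normality proof it is also unnecessary, since convergence in distribution of $\sqrt{N}\bigl(Y_N-\alpha^{-2H}\bigr)$ already yields $Y_N\to\alpha^{-2H}$ in probability, which is all the delta method needs.
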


Observe that $(D1') \Rightarrow (D1)$ and $(D2') \Rightarrow (D2)$.\\

The proof is based on exploring the difference between stationary solutions $z_k(t)$ and non-stationary solutions $x_k(t)$. Conditions $(D\text{x})$ ensure that the difference is negligible for $k \to \infty$ (in an appropriate sense). To simplify the reading process, the detailed proof can be found in Appendix, because it consists of rather technical calculations.

\begin{Remark}\label{rem: non-stat cond adj}
Although the conditions in Theorem \ref{Thm: space asympt - nonstat discr} are rather technical, they are not much restrictive. For example, if
\begin{itemize}
    \item $\frac{\theta_k}{\ln(k)} \overset{k \to \infty}{\longrightarrow} \infty,$
    \item $\inf_k \sigma_k > 0,$ and
    \item $\sup_k \mathbb{E}x^2_k(0) < \infty$,
\end{itemize}
conditions $(D1),(D2),(D1')$ and $(D2')$ are satisfied and $\alpha^{*}_N$ is weakly consistent and asymptotically normal.\\
For strong consistency of $\alpha^{*}_N$ (condition $(D3)$), it suffices to replace $\sup_k \mathbb{E}x^2_k(0) < \infty$ with stronger condition $\sup_k \mathbb{E}x^4_k(0) < \infty$.
\end{Remark}

\subsection{Continuous time observations}

In this section, observation of coordinates of a (non-stationary) solution $x_k(t), k = 1,\ldots,N$ in a fixed time window $t \in [0,T]$ is considered. To let the accelerating time in higher coordinates eliminate the effect of the initial condition, we must leave a certain initial period of time idle (denote its length $\delta > 0$). Define the weighted MCE correspondingly to \eqref{eq: def rew_MC stat cont I}, \eqref{eq: def rew_MC stat cont II} and \eqref{eq: def rew_MC stat cont III}:

\begin{equation}\label{eq: def rew_MC non-stat cont}
\alpha^{*}_N :=
	\begin{cases}
        \left(\frac{\sum_{k=1}^{N} \frac{\theta_k^{2H+1}}{\sigma_k^2} \frac{1}{T-\delta} \int_{\delta}^{T}  x^2_k(t) dt}{ H \Gamma(2H) \sum_{k=1}^{N} \theta_k }\right)^{-\frac{1}{2H}},  \quad \text{for } 0<H<\frac{3}{4},\\

        \left(\frac{\sum_{k=1}^{N} \frac{\theta_k^{\frac{5}{2}}}{\sigma_k^2 \ln(\theta_k T)} \frac{1}{T-\delta} \int_{\delta}^{T}  x^2_k(t) dt}{ \frac{3}{4} \Gamma(\frac{3}{4}) \sum_{k=1}^{N} \frac{\theta_k}{\ln(\theta_k T)} }\right)^{-\frac{1}{2H}}, \quad \text{for }H=\frac{3}{4},\\

        \left(\frac{\sum_{k=1}^{N} \frac{\theta_k^{4-2H}}{\sigma_k^2} \frac{1}{T-\delta} \int_{\delta}^{T}  x^2_k(t) dt}{ H \Gamma(2H) \sum_{k=1}^{N} \theta_k^{4-4H} }\right)^{-\frac{1}{2H}}, \quad \text{for } \frac{3}{4} < H < 1.
    \end{cases}
\end{equation}

\begin{Theorem}\label{Thm: space asympt - nonstat cont}
Assume \eqref{eq: stat condition} throughout this theorem.\\
\noindent If $H \leq \frac{3}{4}$ and\\
     $(C1) \quad \frac{\theta_k}{\ln(k)} \overset{k \to \infty}{\longrightarrow} \infty$, and\\
     $(C2) \quad  \sup_k \left( \frac{\mathbb{E}x^2_k(0)}{\sigma_k^{2}}\right) < \infty$, \\
then $\frac{\alpha^{*}_N -\alpha}{\frac{\alpha^{1+2H}}{2H}\sqrt{\text{var}(Y_N)}} \overset{N \to \infty}{\longrightarrow} U \sim \mathcal{N}(0,1)$  in distribution.\\

\noindent If $H > \frac{3}{4}$ and\\
     $(C1') \quad \theta_k \asymp k^\beta$ for some $\beta > 0$, and\\
     $(C2) \quad  \sup_k \left( \frac{\mathbb{E}x^2_k(0)}{\sigma_k^{2}}\right) < \infty$, \\
then $\frac{\alpha^{*}_N -\alpha}{\frac{\alpha^{1+2H}}{2H}\sqrt{\text{var}(Y_N)}} \overset{N \to \infty}{\longrightarrow} U \sim \mathcal{N}(0,1)$  in distribution.\\

\noindent Let (C1) and\\
     $(C2') \quad  \sup_k \left( \frac{\mathbb{E}x^4_k(0)}{\sigma_k^{4}}\right) < \infty$ \\
hold. Then $\alpha^{*}_N  \overset{N\to \infty}{\longrightarrow} \alpha $ almost surely for all $H \in (0,1)$.
\end{Theorem}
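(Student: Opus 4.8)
The plan is to reduce the non-stationary problem to the stationary one settled in Theorem~\ref{Thm: space asympt - stat cont}, treating the initial condition as a perturbation that the idle period $\delta$ renders negligible. Writing $x_k(t) = z_k(t) + d_k(t)$ with $d_k(t) = e^{-\alpha \theta_k t}(x_k(0) - z_k(0))$ (the decomposition used in the proof of Theorem~\ref{thm: existence of stat solution}), I would expand
\begin{equation*}
\frac{1}{T-\delta}\int_\delta^T x_k(t)^2\,dt = \frac{1}{T-\delta}\int_\delta^T z_k(t)^2\,dt + \frac{1}{T-\delta}\int_\delta^T \bigl(2 z_k(t) d_k(t) + d_k(t)^2\bigr)\,dt .
\end{equation*}
Let $Y_N^{\mathrm{stat}}$ be the weighted functional \eqref{eq: rew_MC gen weights cont} built from the $z_k$ on $[\delta,T]$ and $\tilde Y_N$ its counterpart built from the $x_k$; by stationarity Theorem~\ref{Thm: space asympt - stat cont} applies to $Y_N^{\mathrm{stat}}$ with $T$ replaced by $T-\delta$, so only the difference $\tilde Y_N - Y_N^{\mathrm{stat}}$, whose $k$-th term carries weight $w_k$ times the last integral above, must be controlled.

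The decisive gain is the factor $e^{-\alpha\theta_k\delta}$: on $[\delta,T]$ one has $|d_k(t)| \le e^{-\alpha\theta_k\delta}|x_k(0)-z_k(0)|$, and \eqref{eq: r_k(0)} together with $(C2)$ gives $\mathbb{E}(x_k(0)-z_k(0))^2 \le C\sigma_k^2$. Using Cauchy--Schwarz for the cross term together with $\sqrt{\mathbb{E} z_k(t)^2} \asymp \sigma_k \theta_k^{-H}$, I would obtain
\begin{equation*}
\mathbb{E}\left| \frac{1}{T-\delta}\int_\delta^T \bigl(2 z_k d_k + d_k^2\bigr)\,dt \right| \le C\,\sigma_k^2\,\frac{e^{-\alpha\theta_k\delta}}{\theta_k^{H+1}} + C\,\sigma_k^2\,\frac{e^{-2\alpha\theta_k\delta}}{\theta_k}.
\end{equation*}
Substituting the regime-specific weights and dividing by the normalizing denominator $\sum_k w_k \sigma_k^2 \theta_k^{-2H}$ (which equals $\sum_k\theta_k$, $\sum_k \theta_k/\ln(\theta_k T)$ or $\sum_k \theta_k^{4-4H}$), the resulting numerators are series of the type $\sum_k \theta_k^{p} e^{-c\theta_k\delta}$. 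Condition $(C1)$ (resp. $(C1')$ for $H>\tfrac34$) forces $e^{-c\theta_k\delta}$ to decay faster than any power of $k$, so these numerators stay bounded while the denominators diverge; hence $\mathbb{E}|\tilde Y_N - Y_N^{\mathrm{stat}}| = o\bigl(\sqrt{\mathrm{var}(Y_N)}\bigr)$ and, by Markov, $(\tilde Y_N - Y_N^{\mathrm{stat}})/\sqrt{\mathrm{var}(Y_N)} \to 0$ in probability. A Slutsky argument transfers the asymptotic normality of $(Y_N^{\mathrm{stat}} - \alpha^{-2H})/\sqrt{\mathrm{var}(Y_N)}$ to $\tilde Y_N$, and the delta method with $g(x)=x^{-1/(2H)}$, for which $g'(\alpha^{-2H}) = -\tfrac{1}{2H}\alpha^{1+2H}$, produces the stated normality of $\alpha^{*}_N$.

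For the almost-sure claim under $(C2')$ I would prove $\tilde Y_N - Y_N^{\mathrm{stat}} \to 0$ a.s., the strong consistency of $Y_N^{\mathrm{stat}}$ being already furnished by Theorem~\ref{Thm: space asympt - stat cont} (its SLLN step needs only $\theta_k\to\infty$, which $(C1)$ supplies). Splitting each weighted discrepancy term $w_k R_k$, where $R_k := \frac{1}{T-\delta}\int_\delta^T(2 z_k d_k + d_k^2)\,dt$, into its mean and an independent centered part, the means are summable by the bound just established, while for the centered parts I would invoke the Kolmogorov--Kronecker criterion: $(C2')$ yields $\mathbb{E}x_k^4(0)\le C\sigma_k^4$, hence a bound on $\mathrm{var}(w_k R_k)$ carrying a factor $e^{-c\theta_k\delta}$, which makes $\sum_k \mathrm{var}(w_k R_k)/D_k^2$ convergent (with $D_k$ the partial denominators) and delivers the almost-sure convergence for every $H\in(0,1)$.

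The main obstacle I anticipate is purely the bookkeeping of verifying, simultaneously across the three weight regimes and all $H\in(0,1)$, that each discrepancy series $\sum_k \theta_k^{p} e^{-c\theta_k\delta}$ is indeed dominated by its diverging denominator at the rate $o(\sqrt{\mathrm{var}(Y_N)})$; the conceptual content sits entirely in the exponential gain from the idle period $\delta$, but matching the exponents regime by regime (and calling on $\theta_k\asymp k^\beta$ for $H>\tfrac34$ precisely where Theorem~\ref{Thm: space asympt - stat cont} already required it) is where the care is needed.
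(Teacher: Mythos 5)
Your proposal is correct and follows essentially the same route as the paper's appendix proof: decompose $x_k(t)=z_k(t)+e^{-\alpha\theta_k t}\bigl(x_k(0)-z_k(0)\bigr)$, use the exponential gain $e^{-\alpha\theta_k\delta}$ from the idle period together with $(C2)$ and Cauchy--Schwarz to show $\mathbb{E}\bigl|Y^{(x)}_N-Y^{(z)}_N\bigr|=o\bigl(\sqrt{\mathrm{var}(Y_N)}\bigr)$ under $(C1)$ (resp.\ $(C1')$, which is needed exactly where the stationary Theorem~\ref{Thm: space asympt - stat cont} requires it), then conclude by Slutsky and the delta method, and for the almost-sure claim apply the Kolmogorov SLLN to the centered weighted discrepancies with variances controlled via $(C2')$. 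The only cosmetic differences are that you integrate the exponential to gain an extra factor $\theta_k^{-1}$ (slightly sharper than the paper's bound, but immaterial) and that the paper verifies the SLLN hypothesis via $\sup_k \mathrm{var}(Q_k)<\infty$ plus divergence of the individual denominator terms rather than via an exponential variance bound.
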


Note that $\text{var}(Y_N)$, whose square root determines the speed of convergence of the estimators, is specified in \eqref{eq: Y_N asymp variance cont} and  $(C1') \Rightarrow (C1)$ and $(C2') \Rightarrow (C2)$.\\

For the sake of simplicity and readability, the technical proof of the theorem is shifted to Appendix.

\begin{Example}\label{example: stoch heat eqn}
The performance of the weighted MCE can be illustrated on the stochastic heat equation on $d$-dimensional domain, with distributed fractional noise, Dirichlet boundary condition and deterministic initial condition, as introduced in Example \ref{example: stoch heat eqn - existence}. It can be interpreted as a diagonalizable stochastic evolution equation with eigenvalues $\theta_k \asymp k^{\frac{2}{d}}$ and $\sigma_k = 1$ for $k = 1,2,\dots$.\\

If first $N$ coordinate projections of the solution (see Definition \ref{def: diag solution}) in discrete time-instants are observed, the weighted minimum-contrast estimator in the form \eqref{eq: rew MC disc - nonStat} can be used. Theorem \ref{Thm: space asympt - nonstat discr} (see Remark \ref{rem: non-stat cond adj} for verification of its assumptions) provides the strong consistency and the asymptotic normality (as $N \to \infty$) of the estimator with the rate of convergence $\frac{1}{\sqrt{N}}$.\\

Next, consider the observations of first $N$ coordinates in continuous time-window $t \in [0,T]$ are available. Since $\theta_k \to \infty$, the continuous-time version of weighted MCE (see \eqref{eq: def rew_MC non-stat cont}) can be applied. Because all conditions in Theorem \ref{Thm: space asympt - nonstat cont} hold, the estimator is strongly consistent and asymptotically normal (as $N \to \infty$) with the rate of convergence

\begin{equation} \label{eq: asymptotics in example}
    \sqrt{\text{var}(Y_N)} \asymp
	       \begin{cases}
        	\frac{1}{\sqrt{N^{1+\frac{2}{d}}}} \quad \text{for }0<H<\frac{3}{4},\\[10pt]
	        \frac{\sqrt{\ln N}}{\sqrt{N^{1+\frac{2}{d}}}} \quad \text{for }H=\frac{3}{4},\\[10pt]
          \frac{1}{\sqrt{N^{1+ \frac{8-8H}{d}}}} \quad \text{for }\frac{3}{4}<H<1.
	       \end{cases}
\end{equation}

Asymptotic formulas for $0<H<3/4$ and $3/4<H<1$ in \eqref{eq: asymptotics in example} are direct applications of \eqref{eq: Y_N asymp variance cont} and integral comparison for series. In case $H=3/4$, start with integral comparison and trivial substitution
\begin{equation}
\sum_{k=1}^{N}\frac{k^{\frac{2}{d}}}{\ln(k^{\frac{2}{d}} T)} \asymp \sum_{k=2}^{N}\frac{k^{\frac{2}{d}}}{\ln(k)} \asymp \int_{e}^{N}\frac{x^{\frac{2}{d}}}{\ln(x)} dx = \int_{1+2/d}^{(1+2/d)\ln(N)} e^y y^{-1} dy.
\end{equation}
To conclude, apply Lemma 2.2 in \cite{Cheridito_et_al2003} with $x=(1+2/d)\ln(N)$ and $\beta = -1$ (or use directly one-step integration-by-parts formula) to get the following asymptotic behavior
\begin{equation}
\int_{1+2/d}^{(1+2/d)\ln(N)} e^y y^{-1} dy \asymp e^{(1+2/d)\ln(N)} [(1+2/d)\ln(N)]^{-1} \asymp \frac{N^{1+2/d}}{\ln N}.
\end{equation}
Asymptotic formula for variance is now an obvious consequence of \eqref{eq: Y_N asymp variance cont}.\\

Speed of convergence of $\alpha^{*}_N$ to $\alpha$ in case of the continuous-time weighted MCE is obviously faster compared to its discrete-time version.
\end{Example}

%******************************************************************************************************************************************************
\section{Comparison to other estimators}\label{sec: comparison}

\subsection{Minimum-contrast estimator}
Recall the standard (non-weighted) minimum-contrast estimator, defined in \cite{Kosky-Loges} and further studied in \cite{KrizMaslowski} and \cite{MaPo-Ergo}.
In diagonalizable case, the estimator can be written as follows:
\[
\hat{\alpha} = (\hat{Y}_\infty)^{-\frac{1}{2H}} =  \left(\frac{\sum_{k=1}^{\infty}  \frac{1}{n} \sum_{t=1}^{n}  x^2_k(t)}{H \Gamma(2H) \sum_{k=1}^{\infty}\frac{\sigma_k^2}{\theta_k^{2H}}}\right)^{-\frac{1}{2H}},
\]
for discrete-time observations and similarly for continuous-time observations. If only first $N$ coordinates are available, the modification is straightforward and it verifies:\\

\[
\text{var}(\hat{Y}_N)=\frac{\sum_{k=1}^{N} \text{var} \left(\frac{1}{n} \sum_{t=1}^{n}  x^2_k(t)\right)}{H^2 \Gamma^2(2H) \left(\sum_{k=1}^{N}\frac{\sigma_k^2}{\theta_k^{2H}}\right)^2}.
\]

If space asymptotics is considered ($N \to \infty$), the numerator of $\text{var}(\hat{Y}_N)$ is growing, whereas the denominator converges to a finite sum. In result, $\text{var}(\hat{Y}_N)$ does not converge to zero with $N \to \infty$ and the estimator is not consistent in space. It was shown in \cite{KrizMaslowski} that this MCE is consistent and asymptotically normal in time (i.e. $n\to \infty$), without assuming diagonality.\\ %On the other hand, the weighted MC estimator is consistent (and asymptotically normal) in space ($N \to \infty$) and therefore, it can provide exact value of $\alpha$ even in finite time horizon, if we observe all coordinates.\\

By simple reweighing of the coordinates (the weighted MCE), the poor space-asymptotic properties of the MCE are significantly improved.

\subsection{Maximum likelihood estimator}
Let us start with continuous-time case studied in \cite{Huebner-Rozovskii1995} ($H=1/2$) and later in \cite{Cialenco-Lototsky-Pospisil2009} ($H \geq 1/2$). If considered in the setting of this paper (with $H \geq 1/2$ and $\sigma_k = 1$ for all $k=1,2,\ldots$) the MLE is strongly consistent in space ($N \to \infty$) if and only if
\begin{equation}\label{eq: MLE_cond_cont}
\sum_{k=1}^{\infty} \theta_k = \infty.
\end{equation}
If this holds, the MLE is also asymptotically normal in space (and even asymptotically efficient if $H=1/2$) with speed of convergence given by
\begin{equation}\label{eq: MLE speed of conv}
\frac{1}{\sqrt{\sum_{k=1}^{N}\theta_k}}.
\end{equation}
If compared with the weighted MCE (its speed of convergence is given by the square root of \eqref{eq: Y_N asymp variance cont}), the case $H<1/2$ is covered only by the weighted MCE, in case of $1/2 \leq H < 3/4$, both estimators have the same speed of convergence (which can not be improved if $H=1/2$ due to the asymptotic efficiency of the MLE) and if $3/4 \leq H < 1$, the MLE converges faster than the weighted MCE.\\

In discrete-time case, studied in \cite{Piterbarg-Rozovskii1997} for $H=1/2$, the consistency condition \eqref{eq: MLE_cond_cont} turns into $\sum_{k=1}^{\infty} 1 = \infty$ (in the setting of our paper), which is trivially satisfied. The discrete MLE is thus strongly consistent, asymptotically normal and asymptotically efficient with speed of convergence $1/\sqrt{N}$. The weighted MCE and the MLE have thus the same speed of convergence for $H=1/2$, which can not be improved due to the asymptotic efficiency of the MLE. The author is not aware of any publication which would study the MLE for discrete-time setting in fractional case ($H \neq 1/2$).\\

The implementation of the MLE is rather complicated (for details, see discussion at the end of \cite{Cialenco-Lototsky-Pospisil2009}), in contrast to the simplicity of the weighted MCE. On the other hand, better performance of MLE in case of non-stationary solution can be expected, if only few coordinates are observed.

\subsection{Trajectory fitting estimator}

This estimator was first introduced in \cite{Kutoyants1991} in finite-dimensional setting and recently applied for continuous projections of the solution to diagonalizable parabolic SPDEs driven by a (cylindrical) Wiener process in \cite{Cialenco-Gong-Huang2018}. The explicit expression for the TFE can be found in formula (2.10)  therein and it does not contain any stochastic integration (integration with respect to a random process).\\

If considered in the setting of Example \ref{example: stoch heat eqn - existence} (the heat equation on $d$-dimensional domain with distributed white noise and Dirichlet boundary condition), the TFE (denote $\alpha^{(TFE)}_N$) is strongly consistent in space. Moreover, if $d \geq 2$, it is also asymptotically normal in the following sense
\begin{equation}\label{eq: AN of TFE}
\frac{\alpha^{(TFE)}_N - \alpha + a_N}{b_N} \overset{d}{\longrightarrow} \mathcal{N}(0,1),
\end{equation}
where
\begin{equation}
b_N \asymp \frac{1}{\sqrt{N^{1+\frac{2}{d}}}}, \text{ and } \quad a_N \asymp \frac{1}{N^{\frac{2}{d}}}.
\end{equation}

Note the bias term and its asymptotic behavior $\frac{a_N}{b_N} \asymp N^{\frac{1}{2}-\frac{1}{d}}$. It diverges if $d>2$.\\

In contrast, the weighted MCE is strongly consistent and asymptotically normal without any restriction on the dimension, it has no bias term and the speed of convergence in this example (assuming $H = \frac{1}{2}$) is $\frac{1}{\sqrt{N^{1+\frac{2}{d}}}}.$

%**************************************************************************************************************************************************************
\appendix

\section{Proofs from Section \ref{sec: estimation non-stat}}

%\begin{proof}
\textbf{Proof of Theorem \ref{Thm: space asympt - nonstat discr}}
The proof is based on exploring the difference between stationary solutions $z_k(t)$ and non-stationary solutions $x_k(t)$. Denote
\begin{equation}
     Y^{(z)}_N = \frac{\sum_{k=1}^{N} \frac{\theta_k^{2H}}{\sigma_k^2} \frac{1}{n} \sum_{t=1}^{n}  z^2_k(t)}{N H \Gamma(2H)}, \quad Y^{(x)}_N = \frac{\sum_{k=1}^{N} \frac{\theta_k^{2H}}{\sigma_k^2} \frac{1}{n} \sum_{t=1}^{n}  x^2_k(t)}{N H \Gamma(2H)},
\end{equation}
and observe
\[
\mathbb{E}|Y^{(x)}_N - Y^{(z)}_N| \leq \frac{1}{{N H \Gamma(2H)}}\sum_{k=1}^{N} \frac{\theta_k^{2H}}{\sigma_k^2}\frac{1}{n} \sum_{t=1}^{n} \mathbb{E}|x^2_k(t) - z^2_k(t)|.
\]

Clearly
\[x_k(t) - z_k(t) =  e^{-\alpha \theta_k t} (x_k(0) - z_k(0)),\]
and
\begin{equation}
    \begin{aligned}[b]
    &\mathbb{E}|x^2_k(t) - z^2_k(t)| = \mathbb{E}\biggl|\biggl(x_k(t) - z_k(t)\biggr) \biggl(2z_k(t)+(x_k(t) - z_k(t))\biggr)\biggr| \\
    &\leq 2 \sqrt{\mathbb{E}(x_k(t) - z_k(t))^2} \sqrt{\mathbb{E}z^2_k(t)}  + \mathbb{E}(x_k(t) - z_k(t))^2.
    \end{aligned}
\end{equation}
Continue with
\begin{equation}
    \begin{aligned}[b]
    &\mathbb{E}(x_k(t) - z_k(t))^2 \leq e^{-2 \alpha \theta_k }\mathbb{E}(x_k(0) - z_k(0))^2 \\
    &\leq e^{-2 \alpha \theta_k} 4 \left(\mathbb{E}x^2_k(0) + \frac{\sigma_k^2}{(\alpha \theta_k)^{2H}}H \Gamma(2H)\right),
   \end{aligned}
\end{equation}
and
\begin{equation}
    \mathbb{E}z^2_k(t) = \mathbb{E}z^2_k(0) = \frac{\sigma_k^2}{(\alpha \theta_k)^{2H}}H \Gamma(2H).
\end{equation}
These auxiliary calculations yield
\begin{equation}
    \mathbb{E}|Y^{(x)}_N - Y^{(z)}_N| \leq \frac{1}{H \Gamma(2H)} \frac{1}{N}\sum_{k=1}^{N} \biggl( D_k + 2 \sqrt{\frac{H \Gamma(2H)}{\alpha^{2H}}} \sqrt{D_k} \biggr),
\end{equation}
where
\[
D_k = 4 \; e^{-2\alpha \theta_k} \frac{\theta_k^{2H}}{\sigma_k^{2}} \mathbb{E}x_k^2(0) + 4 \; e^{-2\alpha \theta_k} \frac{H \Gamma(2H)}{\alpha^{2H}}.
\]
Conditions $(D1)$ and $(D2)$ guarantee the convergence $D_k \to 0$ and, consequently, $\mathbb{E}|Y^{(x)}_N - Y^{(z)}_N| \overset{N\to \infty}{\longrightarrow}  0$. This, together with the weak consistency of $Y^{(z)}_N$, guarantees the weak consistency of $\alpha^{*}_N$.\\

To prove strong consistency, write
\[
Y^{(x)}_N - Y^{(z)}_N - \mathbb{E}(Y^{(x)}_N - Y^{(z)}_N) = \frac{1}{{N H \Gamma(2H)}}\sum_{k=1}^{N}(Q_k - \mathbb{E}Q_k),
\]
where
\[
Q_k = \frac{\theta_k^{2H}}{\sigma_k^2}\frac{1}{n} \sum_{t=1}^{n} (x^2_k(t) - z^2_k(t)).
\]
By similar calculations, see that $(D1)$ and $(D3)$ imply
\[
\sup_{k\in \mathbb{N}}\biggl( \text{var}(Q_k) \biggr) < \infty.
\]
Kolmogorov SLLN then ensures $Y^{(x)}_N - Y^{(z)}_N - \mathbb{E}(Y^{(x)}_N - Y^{(z)}_N) \overset{N\to \infty}{\longrightarrow} 0$ almost surely. Conditions $(D1)$ and $(D2)$ guarantee $\mathbb{E}(Y^{(x)}_N - Y^{(z)}_N) \to 0$, which leads to
\[
Y^{(x)}_N - Y^{(z)}_N \overset{N\to \infty}{\longrightarrow} 0 \quad \text{almost surely}.
\]
Strong consistency of $\alpha^{*}_N$ now easily follows.\\

Let us conclude with asymptotic normality. Observe
\[
\frac{Y^{(x)}_N - \alpha^{-2H}}{\sqrt{\text{var}(Y^{(z)}_N)}} = \frac{Y^{(z)}_N - \alpha^{-2H}}{\sqrt{\text{var}(Y^{(z)}_N)}} + \frac{Y^{(x)}_N - Y^{(z)}_N}{\sqrt{\text{var}(Y^{(z)}_N)}}.
\]
The first term is asymptotically normal and for the second term, utilize previous calculations and asymptotic behavior of $\text{var}(Y^{(z)}_N)$ specified in \eqref{eq: consist constant} to see:
\begin{equation}
    \begin{aligned}[b]
    &\mathbb{E}\left|\frac{Y^{(x)}_N - Y^{(z)}_N}{\sqrt{\text{var}(Y^{(z)}_N)}}\right| \leq \frac{1}{\sqrt{\text{var}(Y^{(z)}_N)}} \frac{1}{H \Gamma(2H)} \frac{1}{N}\sum_{k=1}^{N} \biggl( D_k + 2 \sqrt{\frac{H \Gamma(2H)}{\alpha^{2H}}} \sqrt{D_k} \biggr) \\
    &\leq C \; \frac{1}{\sqrt{N}}\sum_{k=1}^{N}\sqrt{D_k}.
    \end{aligned}
\end{equation}
Conditions $(D1')$ and $(D2')$ ensure $D_k \leq C \; k^{\beta}$ for some constants $C > 0$ and $\beta < -1$. Hence,
\[
C \; \frac{1}{\sqrt{N}}\sum_{k=1}^{N}\sqrt{D_k} \overset{N \to \infty}{\longrightarrow} 0.
\]
This guarantees asymptotic normality of $Y^{(x)}_N$. Asymptotic normality of $\frac{\alpha^{*}_N - \alpha}{\frac{\alpha^{1+2H}}{2H}\sqrt{\text{var}(Y^{(z)}_N)}}$ then follows from the delta method. Finally, the denominator can be expressed explicitly by applying \eqref{eq: consist constant} again.

\hfill$ \square$

%\end{proof}

%\begin{proof}
\textbf{Proof of Theorem \ref{Thm: space asympt - nonstat cont}}
Proceed similarly to proof of Theorem \ref{Thm: space asympt - nonstat discr}. Consider
\begin{equation}
Y^{(z)}_N =  \frac{\sum_{k=1}^{N} w_k \frac{1}{T - \delta} \int_{\delta}^{T} z_k(t)^2 dt} {H \Gamma(2H)\sum_{k=1}^{N} w_k \frac{\sigma_k^2}{\theta_k^{2H}}}, \quad   Y^{(x)}_N =  \frac{\sum_{k=1}^{N} w_k \frac{1}{T - \delta} \int_{\delta}^{T} x_k(t)^2 dt} {H \Gamma(2H)\sum_{k=1}^{N} w_k \frac{\sigma_k^2}{\theta_k^{2H}}},
\end{equation}
with weights as in \eqref{eq: def rew_MC non-stat cont}. Employ \eqref{eq: var Y_N cont} to calculate
\[
\mathbb{E}\left|\frac{Y^{(x)}_N - Y^{(z)}_N}{\sqrt{\text{var}(Y_N)}}\right| \leq C \; \frac{1}{\sqrt{\sum_{k=1}^{N} w_k^2 s_k^2}} \sum_{k=1}^{N} \biggl( D_k + \sqrt{w_k \frac{\sigma_k^2}{(\alpha \theta_k)^{2H}}H \Gamma(2H)} \sqrt{D_k} \biggr),
\]
where
\[
D_k = w_k e^{-2 \alpha \theta_k \delta} \left(\mathbb{E}x^2_k(0) + \frac{\sigma_k^2}{(\alpha \theta_k)^{2H}} H\Gamma(2H) \right).
\]
Using formulas for $w_k$ and for asymptotic behavior of $s_k$ (cf. \eqref{eq: def rew_MC stat cont I}, \eqref{eq: def rew_MC stat cont II} and \eqref{eq: def rew_MC stat cont III}) together with condition $(C2)$, gets (in all three cases):
\[
D_k + \sqrt{w_k \frac{\sigma_k^2}{(\alpha \theta_k)^{2H}}H \Gamma(2H)} \sqrt{D_k}  \leq C \; e^{-\frac{\alpha}{2} \theta_k \delta} \quad \text{for some constant } C>0.
\]
Condition $(C1)$ then ensures summability of the corresponding series. Verify further by direct calculation
\[
\sum_{k=1}^{N} w_k^2 s_k^2 \overset{N \to \infty}{\longrightarrow} \infty.
\]
As a consequence,
\[
\mathbb{E}\left|\frac{Y^{(x)}_N - Y^{(z)}_N}{\sqrt{\text{var}(Y_N)}}\right| \overset{N \to \infty}{\longrightarrow} 0.
\]
Asymptotic normality of $\alpha^{*}_N$ follows easily from asymptotic normality of $Y^{(z)}_N$ by the delta method. Note that in case $H> \frac{3}{4}$ the condition $(C1)$ must be strengthen to $(C1')$ to ensure asymptotic normality of $Y^{(z)}_N$ (see Theorem \ref{Thm: space asympt - stat cont}).\\

For strong consistency, write
\[
Y^{(x)}_N - Y^{(z)}_N - \mathbb{E}(Y^{(x)}_N - Y^{(z)}_N) = \frac{1}{\sum_{k=1}^{N} w_k H \Gamma(2H) \frac{\sigma_k^2}{\theta_k^{2H}}} \sum_{k=1}^{N}(Q_k - \mathbb{E}Q_k),
\]
where
\[
Q_k = w_k \frac{1}{T-\delta} \int_{\delta}^{T} (x^2_k(t) - z^2_k(t))dt.
\]
By similar calculations the conditions $(C1)$ and $(C2')$ imply
\[
\sup_{k\in \mathbb{N}}\biggl( \text{var}(Q_k) \biggr) < \infty.
\]
Moreover,
\[
 w_k H \Gamma(2H) \frac{\sigma_k^2}{\theta_k^{2H}} \overset{k \to \infty}{\longrightarrow} \infty.
\]
Hence, application of Kolmogorov SLLN leads to $Y^{(x)}_N - Y^{(z)}_N - \mathbb{E}(Y^{(x)}_N - Y^{(z)}_N) \overset{N \to \infty}{\longrightarrow} 0$ almost surely. The convergence $\mathbb{E}(Y^{(x)}_N - Y^{(z)}_N) \to 0$ follows from the proof of asymptotic normality. In result
\[
Y^{(x)}_N - Y^{(z)}_N \to 0 \quad \text{almost surely}.
\]
Strong consistency of $\alpha^{*}_N$ is a direct consequence.

\hfill$ \square$
%\end{proof}

\section*{Acknowledgments}
I would like to thank to the anonymous referee for valuable comments and suggestions that helped to improve this paper.

\bibliography{pkbibfile}
\bibliographystyle{plain}

\end{document}